  \theoremstyle{definition}
  \newtheorem{thm}{Theorem}[section]
  \crefname{thm}{Theorem}{Theorems}
  \newtheorem{prop}[thm]{Proposition}
  \crefname{prop}{Proposition}{Propositions}
  \newtheorem{lem}[thm]{Lemma}
  \crefname{lem}{Lemma}{Lemmas}
  \newtheorem{exam}[thm]{Example}
  \newcommand{\Aut}{\mathop{\mathrm{Aut}}\nolimits}
  \newcommand{\diag}{\mathop{\mathrm{diag}}\nolimits}
  \newcommand{\ad}{\mathop{\mathrm{ad}}\nolimits}
  \newcommand{\id}{\mathrm{id}}
  \newcommand{\Deltare}{\Delta^{\mathrm{re}}}
  \newcommand{\lie}[1]{\mathfrak{#1}}
  \newcommand{\bb}[1]{\mathbb{#1}}
  \newcommand{\bbZ}{\bb{Z}}
  \newcommand{\bbR}{\bb{R}}
  \newcommand{\bbC}{\bb{C}}
  \newcommand{\mcl}[1]{\mathcal{#1}}
  \newcommand{\disp}{\displaystyle}
  \newcommand{\vep}{\varepsilon}
  \newcommand{\lig}{\lie{g}}
  \newcommand{\lih}{\lie{h}}
  \newcommand{\lisl}{\lie{sl}}
  \newcommand{\lin}{\lie{n}}
  \newcommand{\hsp}{\hspace{2em}}
  \newcommand{\lra}[1]{\langle #1 \rangle}
  \newcommand{\norm}[1]{\lvert #1 \rvert}
  \newcommand{\ovl}[1]{\overline{#1}}
  \newlist{newEnum}{enumerate}{3}
  \setlist[newEnum,1]{label=(\roman*),
                    ref  =(\roman*)}
  \setlist[newEnum,2]{label=(\alph*),
                    ref  =\themyenumi{.(\alph*)}}
  \setlist[newEnum,3]{label=(\arabic*),
                    ref  =\themyenumii{.(\arabic*)}}
  \crefname{newEnumi}{}{}
  \crefname{newEnumii}{}{}
  \crefname{newEnumiii}{}{}
\title{$\lisl_2$ triples whose nilpositive elements
are in a space which is spanned by the real root vectors
in rank 2 symmetric hyperbolic Kac-Moody Lie algebras}
\author{TSURUSAKI Hisanori\thanks{Graduate School of Mathematical Sciences, University of Tokyo,
htsuru@ms.u-tokyo.ac.jp}}
\date{}
\begin{document}
\maketitle
\begin{abstract}
  In analogy to the theory of nilpotent orbit in finite-dimensional semisimple Lie algebras,
  it is known that the principal $\mathfrak{sl}_2$ subalgebras
  can be constructed in hyperbolic Kac-Moody Lie algebras. 
  We obtained a series of $\mathfrak{sl}_2$ subalgebras
  in rank 2 symmetric hyperbolic Kac-Moody Lie algebras
  by extending the aforementioned construction.
  We present this result and also discuss $\mathfrak{sl}_2$ modules
  obtained by the action of the $\mathfrak{sl}_2$ subalgebras
  on the original Lie algebras.
\end{abstract}
\section{Introduction}
Kac-Moody Lie algebras are generalizations of finite-dimensional simple Lie algebras,
and can be divided into three types: finite type, affine type, and indefinite type
(\cite{kac}). Finite type Kac-Moody Lie algebras are finite-dimensional
simple Lie algebras.

In \cite{dyn}, nilpotent orbits in finite type Kac-Moody Lie algebras
were classified. Nilpotent orbits in a finite type Kac-Moody Lie algebra $\lig$
are those formed by inner automorphisms acting on nilpotent elements.
These orbits are completely classified by weighted Dynkin diagrams.

Moreover, from the Jacobson-Morosov theorem,
for a nilpotent element $x$, we can construct
an $\lisl_2$-triple whose nilpositive element is $x$ (\cite[Theorem 3.3.1]{cm}).
Classifying nilpotent orbits is equivalent to classifying $\lisl_2$-triples
in $\lig$ up to inner automorphisms.

Among the nilpotent orbits in a finite type Lie algebra,
a nilpotent orbit with the largest dimension (as an algebraic variety) is called
a principal nilpotent orbit.
Let $\{ e_i, f_i, h_i \}$ be the Chevalley generators of a finite type Lie algebra.
For each $i$, take the appropriate $a_i$'s in $\bbC$ and put
\begin{align*}
  X &= \sum_i e_i,\\
  Y &= \sum_i a_i f_i,\\
  H &= \sum_i a_i h_i.
\end{align*}
This $\{ X, Y, H \}$ is called an $\lisl_2$-triple corresponding to the
principal nilpotent orbit (\cite[Theorem 4.1.6]{cm}).

For a principal nilpotent orbit of a finite type Lie algebra,
we can create a principal $SO(3)$ subalgebra compatible with
the compact involution (\cite{kos}).
We are going into more details. 
Denote the Cartan matrix of $\lig$ by $A = (A_{ij})$
and the Chevalley generators by $\{ e_i, f_i, h_i \}$. Put
\begin{align*}
  p_i &= \sum_j A_{ij}^{-1} (> 0),\\
  n_i &= \sqrt{p_i}.
\end{align*}
Finally, put
\begin{align*}
  J_3 &= \sum_j p_j h_j,\\
  J^+ &= \sum_j n_j e_j,\\
  J^- &= \sum_j n_j f_j.
\end{align*}
This $\{ J_3, J^+, J^- \}$ spans a principal $SO(3)$ subalgebra.
This means that
\begin{align*}
  [J_3, J^\pm] &= \pm J^\pm,\\
  [J^+, J^-] &= J_3.
\end{align*}
In \cite{no}, principal $SO(1, 2)$
subalgebras of hyperbolic Kac-Moody Lie algebras are constructed
and described in terms of the eigenvalues of their Casimir elements.
The principal $SO(1, 2)$ subalgebras are constructed
for certain indefinite type Lie algebras
which are not hyperbolic (\cite{gow}).

In this paper, we will construct
$SO(1, 2)$ subalgebras which are not principal
in rank-2 symmetric Kac-Moody Lie algebras.
However, we will actually construct $\lisl_2$ triples,
which are correspending to $SO(1, 2)$ subalgebras.
The discussion using $\lisl_2$ makes it easy to
compare with Dynkin-Kostant's theory of nilpotent orbits.
The principal $SO (1, 2)$ subalgebra is compatible with the
compact involution $\omega_0$.
This means that when we decompose $\lig$ as an
$\lisl_2$-module into a direct sum of irreducible components,
these irreducible components are unitary with respect to the
Hermitian form in \cite[\S 2.7]{kac} except the
$SO(1, 2)$ subalgebra itself.
We will search for $SO (1, 2)$ subalgebras
that are compatible with $\omega_0$.

In fact, we will search for $\lisl_2$-triples corresponding to
$SO (1, 2)$ subalgebras.
Since it is difficult to grasp the behavior of the imaginary root vectors,
we construct and classify the $\lisl_2$-triples
under the condition that
the nilpositive elements lie in the space spanned by the real roots.
This condition makes it possible to explicitly calculate them.
Although the meaning of this condition is unclear,
we remark that this condition is automatically satisfied for
principal $SO (1, 2)$ subalgebras.

We will construct most of the $\lisl_2$-triples
that satisfy these conditions.
In particular, we classified them in all cases
where the neutral element $H$ is dominant.
For these cases, we will calculate the weighted Dynkin diagrams
and the range of the eigenvalues on $\lih$ in the adjoint actions
of the Casimir elements.
We will also calculate some of the components that appear
when $\lig$ is decomposed by the action of each $\lisl_2$-triple.

\section{General theory of Kac-Moody Lie algebras}
In the following, we consider Kac-Moody Lie algebras on $\bbC$.
Denote by $\lig$ or $\lig (A)$ a Kac-Moody Lie algebra 
for the Cartan matrix $A$. Let $A$ be an $n \times n$ matrix.
Let $\lih$ be a Cartan subalgebra of $\lig$.
The root space with respect to the root $\alpha$ is written as $\lig_\alpha$.
We denote the Chevalley generators of $\lig$ by
$e_i, f_i, h_i \; (i = 0, \ldots, n - 1)$, and
the simple roots of $\lig$ as $\alpha_i \; (i = 0, \ldots, n - 1)$.
In this case, $\lra{h_i, \alpha_j} = a_{ij}$.
We write $\lin^+$ for the subalgebra generated by $e_i$'s and
$\lin^-$ for the subalgebra generated by $f_i$'s.
We also denote by $\omega$ the Chevalley involution on $\lig$.
We denote by $\mcl{W}$ the Weyl group of $\lig$.

The Cartan matrix $A$ is called symmetrizable
when there exist an invertible diagonal matrix
$D = \diag (\vep_0, \ldots, \vep_{n - 1})$
and a symmetric matrix $B$ such that $A = DB$.
A Kac-Moody Lie algebra whose Cartan matrix is symmetrizable is
called a symmetrizable Lie algebra.
From \cite[Theorem 2.2]{kac}, a symmetrizable Lie algebra
has a $\bbC$-valued nondegenerate symmetric bilinear form
$(\cdot \mid \cdot)$ called the standard form.

We fix a real form $\lih_\bbR$ of $\lih$
and define the antilinear automorphism $\omega_0$ in $\lig$ by
\begin{align*}
  \omega_0 (e_i) &= -f_i,\\
  \omega_0 (f_i) &= -e_i \hsp (i = 0, \ldots, n - 1),\\
  \omega_0 (h) &= -h \hsp (h \in \lih_\bbR).
\end{align*}
The automorphism $\omega_0$ is called
the compact involution of $\lig$.
From \cite[\S 2.7]{kac}, when $\lig$ is symmetrizable, we can define the nondegenerate
Hermitian form $(\cdot \mid \cdot)_0$ on $\lig$ by
\begin{align*}
  (x \mid y)_0 = - (\omega_0 (x) \mid y).
\end{align*}
\section{$SO(1, 2)$ subalgebras in hyperbolic Kac-Moody Lie algebras}
In this section, we will briefly recall the theory of
$SO(1, 2)$ subalgebras in the hyperbolic Kac-Moody Lie algebras
from \cite{no}.
An $SO (1, 2)$ subalgebra of $\lig$ is the 3-dimensional subalgebra
spanned by the three non-zero elements
$J^+ \in \lin^+ ,\; J^- \in \lin^- ,\; J_3 \in \lih$, satisfying
\begin{align*}
  [J_3, J^\pm] &= \pm J^\pm,\\
  [J^+, J^-] &= - J_3.
\end{align*}
A representation of an $SO(1, 2)$ subalgebra is called unitary
if the representation space $V$ has an Hermitian scalar product
$(\cdot, \cdot)$ and satisfies the following two conditions:
\begin{newEnum}
  \item \label{unitary1} The actions of $J^+$ and $J^-$ are adjoint,
  and the action of $J_3$ is self-adjoint.
  That is, for any $x, y \in V$,
  \begin{align*}
    ([J^+, x], y) &= (x, [J^-, y]),\\
    ([J_3, x], y) &= (x, [J_3, y]).
  \end{align*}
  \item The Hermitian scalar product $(\cdot, \cdot)$ is
  positive definite.
\end{newEnum}

A hyperbolic Kac-Moody Lie algebra is a Kac-Moody Lie algebra
such that the Cartan matrix $A$ is indefinite type and symmetrizable,
the Dynkin diagram is connected, and any proper connected subdiagram is
of type finite or affine.

Put
\begin{align*}
  p_i = - \sum_j A_{ij}^{-1}.
\end{align*}
Since $p_i \geq 0$ for any $i$, we put
\begin{align*}
  n_i = \sqrt{p_i}.
\end{align*}
We may construct a principal $SO(1, 2)$ subalgebra
of the hyperbolic Kac-Moody Lie algebra $\lig$
as follows.
\begin{align*}
  J_3 &= - \sum_j p_j h_j,\\
  J^+ &= \sum_j n_j e_j,\\
  J^- &= \sum_j n_j f_j.
\end{align*}
When $\lig$ is decomposed into the direct sum of irreducible modules
by the adjoint action of
the principal $SO (1, 2)$ subalgebra, these irreducible modules
except for $SO (1, 2)$ itself are infinite-dimensional
and unitary (\cite{no}).

In the case of the indefinite type Kac-Moody Lie algebras
which are not hyperbolic type,
the principal $SO (1, 2)$ subalgebras
can be constructed in the same way
if $p_i \geq 0$ for any $i$ (\cite{gow}).

On the other hand, an $\lisl_2$-triple in $\lig$ is a subalgebra
of $\lig$ with three non-zero elements
$X \in \lin^+ ,\; Y \in \lin^- ,\; H \in \lih$ such that
\begin{align*}
  [H, X] &= 2X,\\
  [H, Y] &= -2Y,\\
  [X, Y] &= H.
\end{align*}
The subalgebra spanned by $X, Y, H$ in $\lig$ is called
an $\lisl_2$ subalgebra.
An $\lisl_2$-triple can be constructed from the generators of
an $SO (1, 2)$ subalgebra by setting
\begin{align*}
  J^+ &= \frac{1}{\sqrt{2}} X,\\
  J^- &= - \frac{1}{\sqrt{2}} Y,\\
  J_3 &= \frac{1}{2} H.
\end{align*}

In the principal $SO(1, 2)$ subalgebra,
we have $J^- = - \omega_0 (J^+)$.
\begin{lem}[cf. {\cite[\S 2.7 and Theorem 2.2]{kac}}]
  \label{omegaAdjoint}
  Suppose that $\lig$ is symmetrizable.
  For any $u \in \lig$, $\ad u$ and $- {\ad \omega_0 (u)}$ are adjoint
  to each other with respect to $(\cdot \mid \cdot)_0$.
  That is, for any $x, y \in \lig$,
  \begin{align*}
    ([u, x] \mid y)_0 = - (x \mid [\omega_0 (u), y])_0.
  \end{align*}
\end{lem}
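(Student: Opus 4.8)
The plan is to reduce the claimed identity, by unwinding the definition $(x \mid y)_0 = -(\omega_0(x) \mid y)$, to two facts already available in \S2: the invariance of the standard form $(\cdot \mid \cdot)$, i.e.\ $([a, b] \mid c) = -(b \mid [a, c])$ for all $a, b, c \in \lig$ (part of \cite[Theorem 2.2]{kac}), and the fact that $\omega_0$, being an automorphism of $\lig$, satisfies $\omega_0([a, b]) = [\omega_0(a), \omega_0(b)]$ despite being only antilinear. Since $(\cdot \mid \cdot)_0$ exists only in the symmetrizable case, the hypothesis of the lemma is exactly what is needed to make sense of the statement.

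First I would rewrite the left-hand side. By the definition of $(\cdot \mid \cdot)_0$ and the homomorphism property of $\omega_0$,
\[
  ([u, x] \mid y)_0 = -(\omega_0([u, x]) \mid y) = -([\omega_0(u), \omega_0(x)] \mid y),
\]
and then, applying invariance of $(\cdot \mid \cdot)$ with $a = \omega_0(u)$, $b = \omega_0(x)$, $c = y$, the last expression becomes $(\omega_0(x) \mid [\omega_0(u), y])$.

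Next I would treat the right-hand side, which is immediate from the definition of $(\cdot \mid \cdot)_0$ applied to the pair $x$ and $[\omega_0(u), y]$:
\[
  -(x \mid [\omega_0(u), y])_0 = -\bigl(-(\omega_0(x) \mid [\omega_0(u), y])\bigr) = (\omega_0(x) \mid [\omega_0(u), y]).
\]
Comparing the two computations finishes the proof, and the reformulation in terms of $\ad u$ and $\ad \omega_0(u)$ is just a rephrasing of this identity.

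The argument is essentially bookkeeping, so I do not expect a genuine obstacle; the only points requiring care are the sign coming from $(x \mid y)_0 = -(\omega_0(x) \mid y)$ and the placement of $\omega_0$, together with the observation that the antilinearity of $\omega_0$ causes no trouble here because at no stage is a complex scalar pulled through the form. If one prefers, the statement can simply be recognized as a restatement of the contravariance computation for the compact form carried out in \cite[\S 2.7]{kac}.
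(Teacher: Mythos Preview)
Your proof is correct and follows essentially the same route as the paper's: both unwind the definition of $(\cdot\mid\cdot)_0$, use that $\omega_0$ preserves brackets, and invoke the invariance of the standard form from \cite[Theorem~2.2]{kac}. The only cosmetic difference is that the paper first uses antisymmetry of the bracket to write $-([\omega_0(u),\omega_0(x)]\mid y)=([\omega_0(x),\omega_0(u)]\mid y)$ and then invariance in the form $([a,b]\mid c)=(a\mid[b,c])$, whereas you combine these into the single step $([a,b]\mid c)=-(b\mid[a,c])$.
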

\begin{proof}
  By the definition of the Hermitian form $(\cdot \mid \cdot)_0$,
  let $(\cdot \mid \cdot)$ be the standard form and we have
  \begin{align*}
    ([u, x] \mid y)_0 &= - (\omega_0([u, x]) \mid y)\\
    &= - ([\omega_0 (u), \omega_0 (x)] \mid y)\\
    &= ([\omega_0 (x), \omega_0 (u)] \mid y),\\
    - (x \mid [\omega_0 (u), y])_0 &= (\omega_0 (x) \mid [\omega_0 (u), y]).
  \end{align*}
  From \cite[Thm 2.2 a)]{kac}, the standard form is invariant and we have
  \begin{align*}
    ([\omega_0 (x), \omega_0 (u)] \mid y) = (\omega_0 (x) \mid [\omega_0 (u), y]).
  \end{align*}
  Therefore
  \begin{align*}
    ([u, x] \mid y)_0 = - (x \mid [\omega_0 (u), y])_0.
  \end{align*}
\end{proof}
From \cref{omegaAdjoint}, even in the case of non-principal
$SO(1, 2)$ subalgebras, if $J^- = - \omega_0 (J^+)$ holds,
then the unitarity condition \cref{unitary1} is satisfied
with respect to the Hermitian form $(\cdot \mid \cdot)_0$
when $\lig$ is considered as a representation space with adjoint actions.
We will show that the converse is also true.
\begin{lem}
  \label{SO12unitary}
  Suppose that the adjoint action of the $SO (1, 2)$ subalgebra
  on $\lig$ satisfies the unitary condition \cref{unitary1}
  for the Hermitian form $(\cdot \mid \cdot)_0$.
  Then $J^- = -\omega_0 (J^+)$.
\end{lem}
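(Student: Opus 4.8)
The plan is to exploit the uniqueness of adjoint operators with respect to the nondegenerate form $(\cdot\mid\cdot)_0$: the unitarity hypothesis says that $\ad J^+$ has adjoint $\ad J^-$, while \cref{omegaAdjoint} says that it has adjoint $-\ad\omega_0(J^+)$, so these two operators must coincide; and from $\ad J^- = -\ad\omega_0(J^+)$ one should be able to read off $J^- = -\omega_0(J^+)$, since an element of $\lin^-$ that acts trivially by the bracket on $\lih$ must vanish. The whole content of the proof is making these two steps precise in this infinite-dimensional setting.

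First I would specialize \cref{omegaAdjoint} to $u = J^+$, obtaining
\[
  ([J^+, x]\mid y)_0 = -\bigl(x \mid [\omega_0(J^+), y]\bigr)_0 \qquad (x, y \in \lig),
\]
and combine this with the adjointness half of \cref{unitary1}, namely $([J^+, x]\mid y)_0 = (x \mid [J^-, y])_0$. Subtracting gives $\bigl(x \mid [\,J^- + \omega_0(J^+),\, y\,]\bigr)_0 = 0$ for all $x, y \in \lig$. Since $\lig$ is symmetrizable, the Hermitian form $(\cdot\mid\cdot)_0$ is nondegenerate on $\lig$, so fixing $y$ and letting $x$ range over $\lig$ forces $[\,J^- + \omega_0(J^+),\, y\,] = 0$ for every $y \in \lig$; in particular $[h,\, J^- + \omega_0(J^+)] = 0$ for all $h \in \lih$.

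Now set $Z := J^- + \omega_0(J^+)$. Because $J^+ \in \lin^+$ and $\omega_0$ sends $\lin^+$ into $\lin^-$ (it interchanges the $e_i$ and $f_i$ up to sign), we have $Z \in \lin^-$, so $Z$ decomposes as a finite sum $Z = \sum_\alpha Z_\alpha$ over negative roots $\alpha$ with $Z_\alpha \in \lig_\alpha$. From $[h, Z] = 0$ for all $h \in \lih$ together with $[h, Z_\alpha] = \lra{h,\alpha}\,Z_\alpha$ and the linear independence of the root spaces, we get $\lra{h,\alpha}\,Z_\alpha = 0$ for all $h$ and all $\alpha$; since each negative root $\alpha$ is nonzero, $Z_\alpha = 0$, hence $Z = 0$, that is, $J^- = -\omega_0(J^+)$.

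I do not anticipate a genuine obstacle. The only points requiring care are the passage from the scalar identity to the operator identity, which really does use nondegeneracy of $(\cdot\mid\cdot)_0$, and the final step, which uses that $\omega_0$ respects the triangular decomposition so that $Z$ has no Cartan component. It is worth noting that the self-adjointness of $J_3$ from \cref{unitary1} is not used anywhere in this argument.
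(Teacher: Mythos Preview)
Your proof is correct and follows essentially the same route as the paper: use the adjointness from \cref{omegaAdjoint} together with the unitarity hypothesis and nondegeneracy of $(\cdot\mid\cdot)_0$ to obtain $[h, J^- + \omega_0(J^+)] = 0$ for all $h \in \lih$, and then conclude $J^- + \omega_0(J^+) = 0$ from the triangular decomposition. The only cosmetic differences are that the paper restricts the second variable to $\lih$ from the start (so it never passes through the stronger statement that $Z$ is central), and in the last step it phrases things as $Z \in \lih \cap (\lin^+ \oplus \lin^-) = 0$ rather than writing out the root-space decomposition explicitly.
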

\begin{proof}
  Since $J^+$ and $- \omega_0 (J^+)$ are adjoint to each other
  and so are $J^+$ and $J^-$,
  for any $x \in \lig, h \in \lih$, we have
  \begin{align*}
    ([J^+, x], h)_0 &= (x, [- \omega_0 (J^+), h])_0\\
    &= (x, [J^-, h])_0.
  \end{align*}
  Since $(\cdot \mid \cdot)_0$ is nondegenerate, we have
  \begin{align*}
    [- \omega_0 (J^+), h] = [J^-, h],
  \end{align*}
  which implies that
  \begin{align*}
    [h, J^- + \omega_0 (J^+)] = 0.
  \end{align*}
  Since this holds for any $h \in \lih$, we have
  $J^- + \omega_0 (J^+) \in \lih$.
  However, from $[J_3, J^\pm] = \pm J^\pm$,
  we have $J^+, J^- \in \lin^+ \oplus \lin^-$,
  and hence $J^- + \omega_0 (J^+) \in \lin^+ \oplus \lin^-$.
  From the above, we have $J^- + \omega_0 (J^+) = 0$.
  This proves the lemma.
\end{proof}
Motivated from this, we consider only
$SO (1, 2)$ subalgebras that satisfy the condition
$J^- = - \omega_0 (J^+)$, which is rephrased as
$Y = \omega_0 (X)$ in terms of $\lisl_2$-triples.
\section{The real roots of a rank 2 symmetric hyperbolic Kac-Moody Lie algebra}
Let $a$ be an integer such that $a \geq 3$. Let $\lig$
be a Kac-Moody Lie algebra whose Cartan matrix is
\begin{align*}
  \begin{pmatrix}
    2 & -a\\
    -a & 2
  \end{pmatrix}.
\end{align*}
It is of the hyperbolic type.
Any real root can be expressed as
$w (\alpha_0)$ or $w (\alpha_1)$
for some $w \in \mcl{W}$ (\cite{kac}).
Let $r_0$ and $r_1$ be the fundamental reflections for
$\alpha_0$ and $\alpha_1$.
Since $\mcl{W}$ is generated as a group by $r_0$ and $r_1$ (\cite{kac}),
any element of $\mcl{W}$ can be written in the form
\begin{align*}
  \begin{aligned}
    & (r_0 r_1)^i ,\; r_1 (r_0 r_1)^i,\\
    & (r_1 r_0)^i ,\; r_0 (r_1 r_0)^i.
  \end{aligned}
  \hsp (i \in \bbZ)
\end{align*}
\begin{lem}[{\cite[Proposition 4.4]{km}}]
  \label{posroot}
  Let $\{ F_n \}$ be a sequence determined by
  \begin{align*}
    F_0 = 0 ,\; F_1 = 1 ,\; F_{k + 2} = a F_{k + 1} - F_k.
  \end{align*}
  The real positive roots of $\lig$ are of the form either
  \begin{align*}
    \alpha = F_{k + 1} \alpha_0 + F_k \alpha_1
  \end{align*}
  or
  \begin{align*}
    \beta = F_k \alpha_0 + F_{k + 1} \alpha_1.
  \end{align*}
\end{lem}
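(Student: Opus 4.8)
The plan is to prove this by induction on $k$, using the fact that every real root is obtained from $\alpha_0$ or $\alpha_1$ by applying a Weyl group element, combined with the explicit list of Weyl group elements $(r_0r_1)^i$, $r_1(r_0r_1)^i$, $(r_1r_0)^i$, $r_0(r_1r_0)^i$. First I would record the action of the fundamental reflections on the root lattice: since $\langle h_i, \alpha_j\rangle = a_{ij}$ with $a_{00}=a_{11}=2$ and $a_{01}=a_{10}=-a$, we have $r_0(\alpha_0) = -\alpha_0$, $r_0(\alpha_1) = \alpha_1 + a\alpha_0$, and symmetrically $r_1(\alpha_1) = -\alpha_1$, $r_1(\alpha_0) = \alpha_0 + a\alpha_1$. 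The key computation is then to track what happens to a root of the form $F_{k+1}\alpha_0 + F_k\alpha_1$ under $r_1$ (which fixes the $\alpha_0$-coefficient and modifies the $\alpha_1$-coefficient) and under $r_0$: applying $r_0$ to $F_{k+1}\alpha_0 + F_k\alpha_1$ gives $(aF_k - F_{k+1})\alpha_0 + F_k\alpha_1 = F_{k-1}\alpha_0 + F_k\alpha_1$ by the recurrence $F_{k+2} = aF_{k+1} - F_k$ rearranged as $F_{k-1} = aF_k - F_{k+1}$, so $r_0$ shifts the index down by one and swaps the two "slots"; similarly $r_1$ on $F_k\alpha_0 + F_{k+1}\alpha_1$ produces $F_k\alpha_0 + F_{k-1}\alpha_1$. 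So alternately applying $r_0$ and $r_1$ walks through the two claimed families.

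Concretely, I would start from $\alpha_1 = F_0\alpha_0 + F_1\alpha_1$ (the $\beta$-family with $k=0$) and $\alpha_0 = F_1\alpha_0 + F_0\alpha_1$ (the $\alpha$-family with $k=0$), then show by induction that $r_1 r_0 r_1 \cdots$ applied appropriately to $\alpha_0$ produces $F_{k+1}\alpha_0 + F_k\alpha_1$ for all $k \geq 0$, and $r_0 r_1 r_0 \cdots$ applied to $\alpha_1$ produces $F_k\alpha_0 + F_{k+1}\alpha_1$. The induction step is exactly the reflection computation above: if $w(\alpha_0) = F_{k+1}\alpha_0 + F_k\alpha_1$, then $r_1 w(\alpha_0) = F_{k+1}\alpha_0 + (aF_{k+1} - F_k)\alpha_1 = F_{k+1}\alpha_0 + F_{k+2}\alpha_1$, which is the $\beta$-family entry for index $k+1$; then applying $r_0$ to that gives $(aF_{k+1} - F_{k+2})\alpha_0 + F_{k+2}\alpha_1 \cdot$ — wait, one must be careful which slot $r_0$ acts on, but the upshot is that each reflection advances us to the next member of the interleaved list. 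One should also note $F_k > 0$ and $F_{k+1} > F_k$ for $k \geq 1$ (immediate from the recurrence and $a \geq 3$, since $F_{k+2} - F_{k+1} = (a-1)F_{k+1} - F_k > 0$ once $F_{k+1} \geq F_k \geq 0$), so these are genuinely positive roots.

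For the converse — that \emph{every} real positive root has one of these two forms — I would invoke that every real root is $w(\alpha_0)$ or $w(\alpha_1)$ for some $w$ in the explicit list of four families of Weyl group elements, and check that running through $(r_0r_1)^i$, $r_1(r_0r_1)^i$, etc., applied to $\alpha_0$ and $\alpha_1$ produces exactly the roots $\pm(F_{k+1}\alpha_0 + F_k\alpha_1)$ and $\pm(F_k\alpha_0 + F_{k+1}\alpha_1)$, selecting the positive ones. Since this lemma is quoted from \cite{km} (Proposition 4.4 there), I would keep the argument brief and essentially cite that reference, supplying the reflection computation as the verification. The main obstacle is purely bookkeeping: getting the parity/indexing right so that the alternating words $r_0r_1r_0\cdots$ land on the correct $F_k$ and on the correct slot ($\alpha_0$ versus $\alpha_1$ coefficient), and confirming no root is produced twice or missed. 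There is no conceptual difficulty — the recurrence $F_{k+2} = aF_{k+1} - F_k$ is precisely engineered so that $r_0$ and $r_1$ act as index shifts, so once the base cases and the reflection formulas are in place, the induction closes immediately.
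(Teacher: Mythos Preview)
The paper does not supply its own proof of this lemma; it simply records it as \cite[Proposition 4.4]{km} and moves on. Your proposal is correct and is exactly the standard argument behind that cited result: compute the action of $r_0$ and $r_1$ on a root $p\alpha_0 + q\alpha_1$, observe that the recurrence $F_{k+2} = aF_{k+1} - F_k$ is precisely what makes these reflections act as index shifts between the two families, and run the induction over alternating words in $r_0, r_1$. Since the paper offers no argument to compare against, there is nothing further to contrast; your sketch would serve perfectly well as a self-contained proof if one wanted to avoid the external citation.
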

We will distinguish between these roots and call them $\alpha$-type roots
and $\beta$-type roots, and define $\alpha$-type and $\beta$-type
for the root vectors as well.
\section{$\lisl_2$-triples of rank 2 hyperbolic Lie algebra which are compatible with compact involution}
Let $\lig$ be a rank 2 hyperbolic Kac-Moody algebra.
We want to find $X \in \lig$ in the space
spanned by the real root vectors
such that $X$, $Y = \omega_0 (X)$, and $H = [X, Y]$
form an $\lisl_2$-triple.
For $X \in \lig$ in the space spanned by the real root vectors,
$X$ can be written as
\begin{align*}
  X = \sum_i c_i E_i, \hsp (i \in \{ 0, \ldots, n_X - 1 \} ,\; c_i \in \bbC ,\; c_i \neq 0 ,\;
  E_i \in \lig_{\beta_i}, E_i \neq 0)
\end{align*}
where $\beta_i$ ($i \in \{ 0, \ldots, n_X - 1 \}$) are distinct real roots
and $n_X$ is a positive integer.
We call the $n_X$ the \textbf{length} of $X$.
First, we show the following lemma.
\begin{lem}
  \label{manyRealRootsDoNotMakesl2}
  If the length of $X$ is greater than or equal to 3,
  a required $\lisl_2$-triple does not exist.
\end{lem}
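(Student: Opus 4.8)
The plan is to observe that a single one of the $\lisl_2$-relations, namely $[H, X] = 2X$ together with $H \in \lih$ and $X \in \lin^+$, already rules out $n_X \geq 3$; the extra hypotheses $Y = \omega_0 (X)$ and $H = [X, Y]$ will play no role. Write $X = \sum_i c_i E_i$ as in the statement. Since $X \in \lin^+$ and $X$ lies in the span of the real root vectors, the $\beta_i$ are distinct \emph{positive} real roots. Because $H \in \lih$ we have $[H, E_i] = \lra{H, \beta_i} E_i$, and the $E_i$, being nonzero and lying in pairwise distinct root spaces, are linearly independent; comparing the coefficient of each $E_i$ in $[H, X] = 2X$ therefore gives $\lra{H, \beta_i} = 2$ for all $i$.

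I would then record two elementary facts about the positive real roots. First, since $A$ is symmetric the standard form satisfies $(\alpha_0 \mid \alpha_0) = (\alpha_1 \mid \alpha_1) = 2$ and $(\alpha_0 \mid \alpha_1) = -a$, so a root $\beta = x \alpha_0 + y \alpha_1$ has $(\beta \mid \beta) = 2 (x^2 - a x y + y^2)$; as real roots have squared length $2$, the pair $(x, y)$ of any positive real root lies on the conic $x^2 - a x y + y^2 = 1$. Second, two distinct positive real roots cannot be proportional: if $\beta' = c \beta$ then $2 = (\beta' \mid \beta') = 2 c^2$ forces $c = \pm 1$, and $c = -1$ is impossible for two positive roots. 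Consequently $\beta_0$ and $\beta_1$ are linearly independent and form a basis of the two-dimensional space $\bbC \alpha_0 \oplus \bbC \alpha_1$, which contains every root.

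Now assume $n_X \geq 3$ and expand $\beta_2 = s \beta_0 + t \beta_1$ with $s, t \in \bbC$. Pairing with $H$ gives $2 = \lra{H, \beta_2} = 2 s + 2 t$, so $s + t = 1$; moreover $s \neq 0$ and $t \neq 0$, since $s = 0$ (resp.\ $t = 0$) would force $\beta_2 = \beta_1$ (resp.\ $\beta_2 = \beta_0$), contradicting distinctness. Expanding $(\beta_2 \mid \beta_2) = 2$ using $(\beta_0 \mid \beta_0) = (\beta_1 \mid \beta_1) = 2$ together with $s + t = 1$ gives $s t \bigl( (\beta_0 \mid \beta_1) - 2 \bigr) = 0$, hence $(\beta_0 \mid \beta_1) = 2$, and therefore $(\beta_0 - \beta_1 \mid \beta_0 - \beta_1) = 2 - 2 \cdot 2 + 2 = 0$. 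But $\beta_0 - \beta_1$ is a nonzero element $m \alpha_0 + n \alpha_1$ of the root lattice with $m, n \in \bbZ$, so $m^2 - a m n + n^2 = 0$ with $n \neq 0$; dividing by $n^2$ exhibits $m / n$ as a rational root of $T^2 - a T + 1$, forcing its discriminant $a^2 - 4$ to be a perfect square. This is impossible, since $(a - 1)^2 < a^2 - 4 < a^2$ for every integer $a \geq 3$. Hence the required relations cannot all hold when $n_X \geq 3$, and no such $\lisl_2$-triple exists.

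Geometrically the argument just says that all the weights $\beta_i$ would have to lie on one affine line in $\bbC \alpha_0 \oplus \bbC \alpha_1$, while a line meets the conic $x^2 - a x y + y^2 = 1$ in at most two points; the computation above is the explicit version of that. I do not anticipate a real obstacle: the only points needing care are the passage from $[H, X] = 2X$ to the affine-collinearity relation $s + t = 1$, the exclusion of the degenerate cases $s = 0$ and $t = 0$, and the elementary fact that $x^2 - a x y + y^2$ has no nonzero integral zero when $a \geq 3$.
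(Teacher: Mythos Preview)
Your proof is correct and follows essentially the same approach as the paper: both reduce to the observation that the $\beta_i$ must lie simultaneously on the affine line $\lra{H,\cdot}=2$ and on the conic $x^2-axy+y^2=1$ of real roots, which meet in at most two points. The paper simply invokes the line--hyperbola intersection bound directly (citing \cite{km} for the description of $\Deltare$), whereas you carry out the explicit algebraic verification via the bilinear form and the non-square $a^2-4$; as you yourself note in your closing paragraph, these are the same argument.
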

\begin{proof}
  We plot the roots on the $xy$-plane with the $\alpha_0$ component
  as $x$ coordinate
  and the $\alpha_1$ component as $y$ coordinate.
  If $X$, $Y = \omega_0 (X)$, and $H = [X, Y]$
  form an $\lisl_2$-triple,
  $[H, E_i] = 2E_i$ holds for each $i$.
  Put $\beta_i = p \alpha_0 + q \alpha_1$ where $p, q \in \bbR$. Since
  \begin{align*}
    [H, E_i] &= (p \alpha_0 + q \alpha_1)(H)E_i\\
    &= \left( p \alpha_0 (H) + q \alpha_1 (H) \right) E_i,
  \end{align*}
  we have
  \begin{align*}
    p \alpha_0 (H) + q \alpha_1 (H) = 2.
  \end{align*}
  Since we can write $H = r h_0 + s h_1$
  where $r, s \in \bbC$, $h_0, h_1$ are the part of Chevalley generator,
  we have
  \begin{align*}
    \begin{pmatrix}
      r & s
    \end{pmatrix}
    \begin{pmatrix}
      2 & -a\\
      -a & 2
    \end{pmatrix}
    \begin{pmatrix}
      p\\
      q
    \end{pmatrix}
    = 2.
  \end{align*}
  This represents a line on the $xy$-plane.
  In other words, $\beta_i$'s are colinear on the $xy$-plane.

  On the other hand, from \cite[Corollary 4.3]{km},
  the set of real roots $\Deltare$ is represented as the set of
  grid points on the hyperbola
  \begin{align*}
    \Deltare = \{ (x, y) \in \bbZ \times \bbZ \mid x^2 - axy + y^2 = 1 \}.
  \end{align*}
  Therefore, $\beta_i$'s are on the intersection of
  this hyperbola and the line.
  However, since there are at most two intersections of
  a hyperbola and a line, it is not possible to create
  the desired $\lisl_2$-triple when $n_X \geq 3$.
\end{proof}
From \cref{manyRealRootsDoNotMakesl2}, we only need to consider
the case when the length of $X$ is 1 or 2.
The multiplicity of real root is always 1(cf. \cite[Prop 5.1 a)]{kac}),
and the real root can be obtained by acting on the simple roots
with an element of the Weyl group.
Therefore, the real root vectors cam be written in the form
$c w (e_0)$ or $c w (e_1)$, using
$c \in \bbC ,\; w \in \mcl{W}$.
Note that the actions of the Weyl group on the element of $\lig$,
which as now written as $w (e_0)$s,
is the elements of $\Aut \lig$, which is determined by defining
\begin{align*}
  r_i (x) = (\exp (\ad f_i)(\exp (\ad - e_i))(\exp (\ad f_i))
\end{align*}
for the fundamental reflections $r_i$ ($i \in \{0, 1\}$)
(cf. \cite[Lemma 3.8]{kac}), and
$w (x) \in \lig_{w (\alpha)}$ holds for
$x \in \lig_\alpha$.
We now show the following lemma.
\begin{lem}
  \label{commute}
  For $w \in \mcl{W}$, $w \omega_0 = \omega_0 w$.
\end{lem}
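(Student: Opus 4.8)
The plan is to reduce the claim to the two fundamental reflections $r_0, r_1$, since $\mathcal{W}$ is generated by these as a group; once $r_0 \omega_0 = \omega_0 r_0$ and $r_1 \omega_0 = \omega_0 r_1$ are established, the general case follows by writing an arbitrary $w$ as a word in $r_0, r_1$ and using the fact that both sides are (anti)automorphisms, so the relation propagates through composition. Thus I would first reduce to showing $r_i \omega_0 = \omega_0 r_i$ for $i \in \{0, 1\}$.

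For the key step, recall that $r_i$ acts on $\lig$ as the automorphism $\exp(\ad f_i)\exp(\ad(-e_i))\exp(\ad f_i)$, and $\omega_0$ is the antilinear automorphism with $\omega_0(e_i) = -f_i$, $\omega_0(f_i) = -e_i$, $\omega_0(h) = -h$ for $h \in \lih_\bbR$. The natural approach is to conjugate: compute $\omega_0 r_i \omega_0^{-1}$ and check it equals $r_i$. Since $\omega_0$ is an automorphism of $\lig$, it intertwines $\ad u$ with $\ad \omega_0(u)$, i.e. $\omega_0 \circ \exp(\ad u) \circ \omega_0^{-1} = \exp(\ad \omega_0(u))$ (using that $\omega_0$ is $\bbC$-antilinear but the exponential series and the bracket are preserved in the appropriate sense). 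Therefore
\begin{align*}
  \omega_0 r_i \omega_0^{-1}
  &= \omega_0 \exp(\ad f_i)\exp(\ad(-e_i))\exp(\ad f_i)\omega_0^{-1}\\
  &= \exp(\ad \omega_0(f_i))\exp(\ad(-\omega_0(e_i)))\exp(\ad \omega_0(f_i))\\
  &= \exp(\ad(-e_i))\exp(\ad f_i)\exp(\ad(-e_i)).
\end{align*}
So it remains to verify that $\exp(\ad(-e_i))\exp(\ad f_i)\exp(\ad(-e_i)) = \exp(\ad f_i)\exp(\ad(-e_i))\exp(\ad f_i)$. This is an identity about the $\lisl_2$-triple $\{e_i, -f_i, -h_i\}$ (or the standard one $\{e_i, f_i, h_i\}$): both expressions represent the same Weyl group element lifted to $SL_2$, namely the two standard expressions for the nontrivial Weyl element $\begin{pmatrix} 0 & 1 \\ -1 & 0 \end{pmatrix}$ in terms of unipotents. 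One checks this by the $2\times 2$ matrix computation $\exp(-E)\exp(F)\exp(-E) = \exp(F)\exp(-E)\exp(F)$ where $E = \begin{pmatrix} 0 & 1 \\ 0 & 0\end{pmatrix}$, $F = \begin{pmatrix} 0 & 0 \\ 1 & 0 \end{pmatrix}$, and transporting this through the $\lisl_2$-representation generated by $e_i, f_i$ inside $\lig$.

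I expect the main obstacle to be the bookkeeping around antilinearity: care is needed that $\omega_0 \exp(\ad u)\omega_0^{-1} = \exp(\ad \omega_0(u))$ really holds when $\omega_0$ is only $\bbR$-linear (it does, because conjugating $\ad u$ by $\omega_0$ gives $\ad \omega_0(u)$, and the exponential is defined by the same universal series on the nilpotent operator $\ad e_i$ or $\ad f_i$), and that the local $\lisl_2$ identity is applied with the correct signs. Once these two pieces — the reduction to $r_0, r_1$ and the conjugation computation — are in place, the lemma follows.
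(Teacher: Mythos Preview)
Your proposal is correct and follows essentially the same route as the paper: reduce to the fundamental reflections, compute $\omega_0 r_i \omega_0^{-1} = \exp(\ad(-e_i))\exp(\ad f_i)\exp(\ad(-e_i))$ via the conjugation rule $\omega_0 \exp(\ad u)\omega_0^{-1} = \exp(\ad \omega_0(u))$, and then verify the resulting identity by the $2\times 2$ matrix computation in $SL_2$. The only cosmetic difference is that the paper checks $r_i^{-1}\,\omega_0 r_i \omega_0^{-1} = \id$ as a single product of six unipotent matrices, whereas you compare the two triple products directly; the content is identical.
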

\begin{proof}
  It is sufficient to show that when $w = r_i$.
  we will show $r_i^{-1} \omega_0 r_i \omega_0^{-1} = \id$.
  We have
  \begin{align*}
    r_i^{-1} &= (\exp (\ad - f_i)(\exp (\ad e_i))(\exp (\ad - f_i)),\\
    \omega_0 r_i \omega_0^{-1} &= (\exp (\ad \omega_0 (f_i))
    (\exp (\ad \omega_0 (- e_i)))(\exp (\ad \omega_0 (f_i)))\\
    &= (\exp (\ad - e_i))(\exp (\ad f_i))(\exp (\ad - e_i)).
  \end{align*}
  If we consider this in terms of the $SL(2, \bbC)$ representation
  to $\Aut \lig$, we get
  \begin{align*}
    \begin{pmatrix}
      1 & 1\\
      0 & 1
    \end{pmatrix}
    &\mapsto \exp \ad e_i,\\
    \begin{pmatrix}
      1 & -1\\
      0 & 1
    \end{pmatrix}
    &\mapsto \exp \ad - e_i,\\
    \begin{pmatrix}
      1 & 0\\
      1 & 1
    \end{pmatrix}
    &\mapsto \exp \ad f_i,\\
    \begin{pmatrix}
      1 & 0\\
      -1 & 1
    \end{pmatrix}
    &\mapsto \exp \ad - f_i.\\
  \end{align*}
  Therefore we have
  \begin{align*}
    \begin{pmatrix}
      1 & 0\\
      -1 & 1
    \end{pmatrix}
    \begin{pmatrix}
      1 & 1\\
      0 & 1
    \end{pmatrix}
    \begin{pmatrix}
      1 & 0\\
      -1 & 1
    \end{pmatrix}
    \begin{pmatrix}
      1 & -1\\
      0 & 1
    \end{pmatrix}
    \begin{pmatrix}
      1 & 0\\
      1 & 1
    \end{pmatrix}
    \begin{pmatrix}
      1 & -1\\
      0 & 1
    \end{pmatrix}
    \mapsto r_i^{-1} \omega_0 r_i \omega_0^{-1}.
  \end{align*}
  Calculating the left-hand side, we get
  \begin{align*}
    \begin{pmatrix}
      1 & 0\\
      -1 & 1
    \end{pmatrix}
    \begin{pmatrix}
      1 & 1\\
      0 & 1
    \end{pmatrix}
    \begin{pmatrix}
      1 & 0\\
      -1 & 1
    \end{pmatrix}
    \begin{pmatrix}
      1 & -1\\
      0 & 1
    \end{pmatrix}
    \begin{pmatrix}
      1 & 0\\
      1 & 1
    \end{pmatrix}
    \begin{pmatrix}
      1 & -1\\
      0 & 1
    \end{pmatrix}
    =
    \begin{pmatrix}
      1 & 0\\
      0 & 1
    \end{pmatrix}.
  \end{align*}
  Therefore, we have $r_i^{-1} \omega_0 r_i \omega_0^{-1} = \id$.
\end{proof}
\begin{lem}
  \label{singleRealRootDoNotMakessl2}
  When the length of $X$ is 1, a required $\lisl_2$ triple does not
  exist.
\end{lem}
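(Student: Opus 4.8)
The plan is to combine the description of real root vectors with \cref{commute}. Since every real root has multiplicity $1$ and is a Weyl translate of $\alpha_0$ or $\alpha_1$, an element $X$ of length $1$ is, up to scalar, a Weyl translate of a simple root vector: $X = c\, w(e_i)$ for some $i \in \{0,1\}$, some $w \in \mcl{W}$, and some $c \in \bbC$ with $c \neq 0$. By the symmetry of $\lig$ exchanging the two nodes, I would assume $i = 0$; the case $i = 1$ is then handled by the same argument with $e_0, f_0, h_0$ replaced by $e_1, f_1, h_1$.

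Next I would compute $Y$ and $H$ explicitly. Since $\omega_0$ is antilinear and, by \cref{commute}, commutes with $w$,
\begin{align*}
  Y = \omega_0(X) = \omega_0\bigl(c\, w(e_0)\bigr) = \ovl{c}\, w(\omega_0(e_0)) = -\ovl{c}\, w(f_0).
\end{align*}
Because $w$ is a $\bbC$-linear Lie algebra automorphism of $\lig$, this gives
\begin{align*}
  H = [X, Y] = \bigl[c\, w(e_0),\, -\ovl{c}\, w(f_0)\bigr] = -\norm{c}^2\, w([e_0, f_0]) = -\norm{c}^2\, w(h_0),
\end{align*}
which indeed lies in $\lih$ and is non-zero since $c \neq 0$ and $w(h_0) \neq 0$.

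Finally, I would extract the contradiction from the relation $[H, X] = 2X$:
\begin{align*}
  [H, X] = -\norm{c}^2\, c\, w([h_0, e_0]) = -2\norm{c}^2\, c\, w(e_0) = -\norm{c}^2 (2X),
\end{align*}
so $[H,X] = 2X$ forces $\norm{c}^2 = -1$, which is impossible. Hence no required $\lisl_2$-triple of length $1$ exists. The conceptual point — worth stating as a remark — is that $\{w(e_0), w(f_0), w(h_0)\}$ \emph{is} a genuine $\lisl_2$-triple, but replacing $w(f_0)$ by $Y = \omega_0(X)$ introduces the factor $-\norm{c}^2 < 0$ in the neutral element, i.e.\ the sign pattern of an $SO(1,2)$-type configuration rather than of $\lisl_2$. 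There is no real obstacle in this argument; the only thing to be careful about is distinguishing the antilinearity of $\omega_0$ from the $\bbC$-linearity of $w$ and of the bracket when tracking the scalar $c$.
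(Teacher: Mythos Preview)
Your proof is correct and follows essentially the same route as the paper: write $X = c\,w(e_i)$, use \cref{commute} and the antilinearity of $\omega_0$ to get $Y = -\ovl{c}\,w(f_i)$ and $H = -\norm{c}^2 w(h_i)$, then read off $\norm{c}^2 = -1$ from $[H,X]=2X$. The only cosmetic difference is that you invoke the node-exchange symmetry to reduce to $i=0$, whereas the paper simply remarks that the case $i=1$ is identical.
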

\begin{proof}
  In the following, we denote the complex conjugate of
  a complex number $z$ by $\ovl{z}$ and the absolute value
  by $\norm{z}$.
  When the length of $X$ is 1,
  we can write $X = c w (e_0)$ or $X = c w (e_1)$
  for some $c \in \bbC$ and $w \in \mcl{W}$.
  When $X = c w (e_0)$ holds, from \cref{commute},
  \begin{align*}
    Y &= \ovl{c} w (\omega_0 (e_0))\\
    &= - \ovl{c} w (f_0),
  \end{align*}
  and
  \begin{align*}
    H &= - \norm{c}^2 w (h_0).
  \end{align*}
  To satisfy the condition that $X, Y, H$ forms $\lisl_2$-triple,
  $[H, X] = 2X$ should hold. Now we have
  \begin{align*}
    [H, X] &= [- \norm{c}^2 w(h_0), c w (e_0)]\\
    &= - \norm{c}^2 2cw(e_0).
  \end{align*}
  Therefore we have $\norm{c}^2 = -1$.
  Since there is no such complex number,
  the $\lisl_2$-triple cannot be constructed in this case.
  Since the same is true for $X = c w (e_1)$,
  the $\lisl_2$-triple cannot be constructed
  when $n_X = 1$.
\end{proof}
Next, we consider the case where the length of $X$ is 2.
First, consider the case when $X = c_0 w (e_0) + c_1 w' (e_1)$
for some $c_0, c_1 \in \bbC$ and $w, w' \in \mcl{W}$.
Let $k, l, m$, and $n$ be integers such that
$w (e_0) \in \lig_{k \alpha_0 + l \alpha_1} ,\; w' (e_1) \in \lig_{m \alpha_0 + n \alpha_1}$.
Using \cref{commute}, we have
\begin{align*}
  Y &= \ovl{c_0} w (- f_0) + \ovl{c_1} w' (- f_1)\\
  &= - \ovl{c_0} w (f_0) - \ovl{c_1} w' (f_1),
\end{align*}
and
\begin{align*}
  H &= [X, Y]\\
  &= - \norm{c_0}^2 w(h_0) - \norm{c_1}^2 w' (h_1)
  - c_0\ovl{c_1}[w(e_0), w'(f_1)] - \ovl{c_0}c_1[w'(e_1), w(f_0)]\\
  &= - \norm{c_0}^2 (k h_0 + l h_1) - \norm{c_1}^2 (m h_0 + n h_1)
  - c_0\ovl{c_1}[w(e_0), w'(f_1)] - \ovl{c_0}c_1[w'(e_1), w(f_0)].
\end{align*}
For the root space to which $w'(e_1)$ and $w(f_0)$ belong,
the sum of their roots is not 0.
The same is true for $w(e_0)$ and $w'(f_1)$.
Therefore, for $H \in \lih$ to hold,
it should hold that
\begin{align*}
  [w(e_0), w'(f_1)] = 0 ,\;
  [w'(e_1), w(f_0)] = 0.
\end{align*}
This condition holds when
$w = (r_0 r_1)^x ,\; w' = (r_1 r_0)^y$, or
$w = r_1 (r_0 r_1)^x ,\; w' = r_0 (r_1 r_0)^y$.
When this condition holds, we have
\begin{align*}
  H &= - \norm{c_0}^2 (k h_0 + l h_1) - \norm{c_1}^2 (m h_0 + n h_1)\\
  &= (- k \norm{c_0}^2 - m \norm{c_1}^2) h_0 + (- l \norm{c_0}^2 - n \norm{c_1}^2) h_1.
\end{align*}
If $[H, X] = 2X$, then $X, Y, H$ form an $\lisl_2$ triple.
From the fact that
\begin{align*}
  [H, X] &= (-k \norm{c_0}^2 - m \norm{c_1}^2) (2k - al) c_0 w (e_0) + (-l \norm{c_0}^2 - n \norm{c_1}^2) (-ak + 2l) c_0 w (e_0)\\
  &+ (-k \norm{c_0}^2 - m \norm{c_1}^2) (2m - an) c_1 w' (e_1) + (-l \norm{c_0}^2 - n \norm{c_1}^2) (-am + 2n) c_1 w' (e_1),
\end{align*}
it should be satisfied that
\begin{align*}
  (akl - 2k^2 + akl - 2l^2) \norm{c_0}^2 + (alm - 2km + akn - 2ln) \norm{c_1}^2 &= 2, \text{and}\\
  (akn - 2km + alm - 2ln) \norm{c_0}^2 + (amn - 2m^2 + amn - 2n^2) \norm{c_1}^2 &= 2.
\end{align*}
Let
\begin{align*}
  A &= 2akl - 2k^2 - 2l^2,\\
  B &= alm - 2km + akn - 2ln,\\
  C &= 2amn - 2m^2 - 2n^2,
\end{align*}
then
\begin{align*}
  A \norm{c_0}^2 + B \norm{c_1}^2 &= 2,\\
  B \norm{c_0}^2 + C \norm{c_1}^2 &= 2.
\end{align*}
If $B^2 - AC \neq 0$,
\begin{align*}
  \norm{c_0}^2 = \frac{2 (B - C)}{B^2 - AC} ,\; \norm{c_1}^2 = \frac{2 (B - A)}{B^2 - AC}.
\end{align*}
In addition, the following lemma holds.
\begin{lem}
  \label{B^2-AC}
  $B^2 - AC = (a^2 - 4)(kn - lm)^2$.
\end{lem}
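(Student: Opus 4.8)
The plan is to recognize that $A$, $B$, and $C$ are, up to sign, the values of the symmetric bilinear form attached to the Cartan matrix $M = \begin{pmatrix} 2 & -a \\ -a & 2 \end{pmatrix}$ on the integer row vectors $u = (k, l)$ and $v = (m, n)$. A direct computation of $M u^T$ and $M v^T$ shows
\begin{align*}
  A &= -\, u M u^T = -(2k^2 - 2akl + 2l^2),\\
  B &= -\, u M v^T = -(2km - akn - alm + 2ln),\\
  C &= -\, v M v^T = -(2m^2 - 2amn + 2n^2),
\end{align*}
so that $B^2 - AC = (u M v^T)^2 - (u M u^T)(v M v^T)$.

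Next I would package these three numbers into a $2 \times 2$ Gram-type determinant. Setting $P = \begin{pmatrix} k & m \\ l & n \end{pmatrix}$, whose columns are $u^T$ and $v^T$, the matrix $P^T M P$ has entries $u M u^T$, $u M v^T$, $v M u^T = u M v^T$ (here using that $M$ is symmetric), and $v M v^T$; hence
\begin{align*}
  (u M u^T)(v M v^T) - (u M v^T)^2 = \det(P^T M P) = (\det P)^2 \det M.
\end{align*}
Since $\det P = kn - lm$ and $\det M = 4 - a^2$, combining the two displays gives $B^2 - AC = -(kn - lm)^2 (4 - a^2) = (a^2 - 4)(kn - lm)^2$, which is the assertion.

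I do not expect a genuine obstacle here: the single point requiring any thought is spotting that the definitions of $A$, $B$, $C$ encode the bilinear form of $M$, after which the standard identity $\det(P^T M P) = (\det P)^2 \det M$ finishes everything and explains transparently why the coefficient $a^2 - 4 = -\det M$ appears. As a fallback, the identity can also be checked by brute force — expand both sides as polynomials in $a, k, l, m, n$ and compare coefficients — but the determinant argument is shorter and more illuminating, and it is the route I would write up.
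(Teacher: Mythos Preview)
Your argument is correct and takes a genuinely different route from the paper. The paper proves the identity by brute-force expansion: it multiplies out $B^2$ and $AC$ term by term, collects the surviving monomials, and observes that what remains is $(a^2-4)(kn-lm)^2$. Your proof instead observes that $A=-uMu^T$, $B=-uMv^T$, $C=-vMv^T$ for $M=\begin{pmatrix}2&-a\\-a&2\end{pmatrix}$, $u=(k,l)$, $v=(m,n)$, and then invokes the multiplicativity of determinants on the Gram matrix $P^TMP$ with $P=\begin{pmatrix}k&m\\l&n\end{pmatrix}$. This is shorter and more conceptual: it explains \emph{why} the factor $a^2-4=-\det M$ and the square $(kn-lm)^2=(\det P)^2$ must appear, whereas in the paper these emerge only after cancellation. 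The paper's approach, on the other hand, requires no insight beyond careful bookkeeping and is entirely self-contained; your version implicitly relies on recognizing the bilinear-form structure hidden in the definitions of $A,B,C$, which is a small but real step. Both are complete proofs of the same elementary polynomial identity.
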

\begin{proof}
  \begin{align*}
    B^2 - AC &= a^2l^2m^2 + 4k^2m^2 + a^2k^2n^2 + 4l^2n^2\\
    & \hphantom{{}={}} - 4aklm^2 + 2a^2klmn - 4al^2mn - 4ak^2mn + 8klmn -4akln^2\\
    & \hphantom{{}={}} - 4a^2klmn + 4aklm^2 + 4akln^2 + 4ak^2mn - 4k^2m^2\\
    & \hphantom{{}={}} - 4k^2n^2 + 4al^2mn - 4l^2m^2 - 4l^2n^2\\
    &= (-2a^2 + 8)klmn + (a^2 - 4)l^2m^2 + (a^2 - 4) k^2n^2\\
    &= (a^2 - 4)(kn - lm)^2
  \end{align*}
\end{proof}
From \cref{B^2-AC} and $a > 3$, if $kn - lm \neq 0$,
then $B^2 - AC > 0$.
Since we have $\norm{c_0}^2 \geq 0 ,\; \norm{c_1}^2 \geq 0$,
we want to find the condition.
If $w = (r_0 r_1)^x ,\; w' = (r_1 r_0)^y$, then
$k = F_{2x + 1} ,\; l = F_{2x} ,\; m = F_{2y} ,\; n = F_{2y + 1}$.
If $w = r_1 (r_0 r_1)^x ,\; w' = r_0 (r_1 r_0)^y$, then
$k = F_{2x + 1} ,\; l = F_{2x + 2} ,\; m = F_{2y + 2} ,\; n = F_{2y + 1}$.
We will show the following lemma first.
\begin{lem}
  \label{ACis-2}
  For any $i, j \in \bbZ_{\geq 0}$, if
  $k = F_{i + 1} ,\; l = F_i ,\; m = F_j ,\; n = F_{j + 1}$,
  then $A = C = -2$.
\end{lem}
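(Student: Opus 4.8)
The plan is to reduce the statement to a single quadratic identity for the sequence $\{F_n\}$ and then dispatch it either by a one-line induction or by citing \cref{posroot}. Unwinding the definitions of $A$ and $C$,
\begin{align*}
  A = -2(k^2 - akl + l^2), \hsp C = -2(m^2 - amn + n^2),
\end{align*}
so with $k = F_{i+1},\ l = F_i,\ m = F_j,\ n = F_{j+1}$ it suffices to prove the single identity
\begin{align*}
  F_{r+1}^2 - a F_{r+1} F_r + F_r^2 = 1 \hsp (r \in \bbZ_{\geq 0}).
\end{align*}
Indeed, applying it with $r = i$ gives $A = -2$, and applying it with $r = j$ gives $C = -2$ (the quadratic form $m^2 - amn + n^2$ at $(F_j, F_{j+1})$ is the same value as $F_{j+1}^2 - a F_{j+1} F_j + F_j^2$, since multiplication is commutative).

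First I would verify the base case $r = 0$: since $F_1 = 1$ and $F_0 = 0$, the left-hand side equals $1 - 0 + 0 = 1$. For the inductive step, I would assume the identity for $r$ and substitute $F_{r+2} = a F_{r+1} - F_r$ into $F_{r+2}^2 - a F_{r+2} F_{r+1} + F_{r+1}^2$; on expansion the two $a^2 F_{r+1}^2$ contributions cancel and the remainder collapses to $F_{r+1}^2 - a F_{r+1} F_r + F_r^2$, which is $1$ by hypothesis. This is a short, routine computation — there is essentially no obstacle here, the only point of care being that $i$ and $j$ run over $\bbZ_{\geq 0}$, so the base cases $(F_1, F_0) = (1,0)$ and $(F_0, F_1) = (0,1)$ really are included.

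Alternatively, and perhaps more conceptually, I could invoke \cref{posroot} directly: for every $r \geq 0$ the vector $F_{r+1}\alpha_0 + F_r\alpha_1$ is a real root of $\lig$, and by the description of $\Deltare$ as the lattice points of the hyperbola $x^2 - axy + y^2 = 1$ already used in the proof of \cref{manyRealRootsDoNotMakesl2}, the pair $(F_{r+1}, F_r)$ satisfies precisely the identity above; the $\beta$-type roots $F_j\alpha_0 + F_{j+1}\alpha_1$ take care of the statement for $C$ in the same way. Either route makes clear that the content of the lemma is just that consecutive terms of $\{F_n\}$ parametrize points on the defining hyperbola, so I expect no step to be genuinely hard; in the write-up I would favour the self-contained induction and remark on the root-theoretic interpretation.
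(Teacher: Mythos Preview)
Your proof is correct and is essentially the same as the paper's: the paper defines $f(i) = 2aF_{i+1}F_i - 2F_{i+1}^2 - 2F_i^2$, checks $f(i+1) = f(i)$ via the recurrence $F_{i+2} = aF_{i+1} - F_i$, and concludes $f(i) = f(0) = -2$, which is exactly your induction after dividing through by $-2$. Your alternative remark, reading the identity as the statement that $(F_{r+1}, F_r)$ lies on the hyperbola $x^2 - axy + y^2 = 1$ parametrizing $\Deltare$, is a nice conceptual gloss that the paper does not make explicit at this point.
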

\begin{proof}
  If we set $f(i) = 2aF_{i + 1}F_i - 2F_{i + 1}^2 - 2F_i^2$, then
  \begin{align*}
    f (i + 1) &= 2aF_{i + 2}F_{i + 1} - 2F_{i + 2}^2 - 2F_{i + 1}^2\\
    &= 2a(aF_{i + 1} - F_i)F_{i + 1} - 2(aF_{i + 1} - F_i)^2 - 2F_{i + 1}^2\\
    &= 2aF_{i + 1}F_i - 2F_{i + 1}^2 - 2F_i^2\\
    &= f(i).
  \end{align*}
  Therefore, $f(i) = f(0) = -2$.
  From this we know that $A = C = -2$.
\end{proof}
From \cref{ACis-2}, we have
\begin{align*}
  \norm{c_0}^2 = \norm{c_1}^2 &= \frac{2(B + 2)}{B^2 - 4}\\
  &= \frac{2}{B - 2}.
\end{align*}
\begin{lem}
  \label{BisGreaterThan2}
  For any $i, j \in \bbZ_{\geq 0}$, if
  $k = F_{i + 1} ,\; l = F_i ,\; m = F_j ,\; n = F_{j + 1}$,
  then $B > 2$.
\end{lem}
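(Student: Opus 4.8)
The plan is to read off the result from the two preceding lemmas rather than to attack $B$ head-on. By \cref{ACis-2} we have $A = C = -2$, so $AC = 4$, and \cref{B^2-AC} then gives $B^2 - 4 = (a^2 - 4)(kn - lm)^2$. Accordingly I would split the argument into two independent parts: (a) show $B^2 - 4 > 0$, which already forces $B > 2$ or $B < -2$; and (b) show $B > 0$, which rules out the second alternative. Together these yield $B > 2$ (in fact $B \geq 3$).

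For part (a): with $k = F_{i+1}$, $l = F_i$, $m = F_j$, $n = F_{j+1}$ one has $kn - lm = F_{i+1}F_{j+1} - F_iF_j$. A one-line induction on $F_{m+2} = aF_{m+1} - F_m$ (using $a \geq 3$) shows that $(F_m)_{m\geq 0}$ is strictly increasing with $F_0 = 0$ and $F_m \geq 1$ for $m \geq 1$; hence $F_{i+1}F_{j+1} \geq 1$, while either $i = 0$ or $j = 0$ (so $F_iF_j = 0$) or else $F_iF_j < F_{i+1}F_{j+1}$. In all cases $kn - lm$ is a positive integer, so $(kn - lm)^2 \geq 1$; since $a^2 - 4 > 0$, we conclude $B^2 - 4 > 0$, i.e. $B > 2$ or $B < -2$.

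For part (b): rewrite $B = aS - 2T$ with $S = F_iF_j + F_{i+1}F_{j+1}$ and $T = F_{i+1}F_j + F_iF_{j+1}$. The key point is the factorisation $S - T = (F_{i+1} - F_i)(F_{j+1} - F_j) \geq 0$, valid because $(F_m)$ is nondecreasing; together with $T \geq 0$ and $a \geq 3$ this gives $B = aS - 2T \geq (a - 2)S \geq 0$. Combining with part (a) we obtain $B > 2$; more precisely $B^2 = 4 + (a^2 - 4)(kn - lm)^2 \geq a^2 \geq 9$, so $B \geq 3$.

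I do not expect a genuine obstacle here: the only points needing a little care are the edge cases $i = 0$ or $j = 0$ in part (a), where some of the $F_m$ vanish, and the routine verification that $(F_m)$ is increasing. If one prefers to avoid invoking \cref{ACis-2} and \cref{B^2-AC}, there is a self-contained alternative: set $V_{-1} = 2$, $V_0 = a$, $V_{m+1} = aV_m - V_{m-1}$; verify by direct substitution that $B = F_{j+1}V_i - F_jV_{i-1}$; prove $F_{j+1}V_i - F_jV_{i-1} = V_{i+j}$ by induction on $j$ using the recursion for $F$; and observe that $(V_m)_{m\geq -1}$ is strictly increasing, whence $B = V_{i+j} \geq V_0 = a \geq 3 > 2$.
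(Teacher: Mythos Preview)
Your argument is correct and takes a genuinely different route from the paper's own proof. The paper argues by direct monotonicity: it computes $B_{00} = a > 2$ and then shows, via the recursion $k_{i+1} = ak_i - l_i,\ l_{i+1} = k_i$, that $B_{(i+1)j} - B_{ij} > 0$ (and by symmetry the same in $j$), so $B_{ij} \geq B_{00} > 2$ for all $i,j$. You instead recycle \cref{B^2-AC} and \cref{ACis-2} to obtain $B^2 > 4$, and then rule out the branch $B < -2$ with the neat factorisation $S - T = (F_{i+1}-F_i)(F_{j+1}-F_j) \geq 0$, which gives $B = aS - 2T \geq (a-2)S > 0$.

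What each approach buys: your proof is shorter, reuses what has already been established, and even yields the sharper bound $B \geq a \geq 3$ (your closed form $B = V_{i+j}$ makes this transparent). The paper's proof, on the other hand, establishes as a byproduct that $B_{ij}$ is \emph{strictly increasing} in each of $i$ and $j$; this is not part of the lemma's statement, but the paper explicitly invokes it later when analysing the Casimir eigenvalue $E^+$ (see \cref{rangeOfE}), so that monotonicity would need to be supplied separately if one adopted your argument.
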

\begin{proof}
  Since $B$ depends on $i$ and $j$,
  we will write it subscripted as $B_{ij}$.
  Similarly, we wrill write $k, l, m, n$
  as $k_i, l_i, m_j, n_j$.
  We can calculate $B_{00} = a > 2$.
  It is sufficient to show that $B_{ij}$ is
  monotonically increasing with respect to $i$ and $j$.
  By symmetry, it is sufficient to show only for $i$.
  Since
  \begin{align*}
    k_{i + 1} = ak_i - l_i, \hsp l_{i + 1} = k_i,
  \end{align*}
  We can calculate
  \begin{align*}
    B_{(i + 1)j} - B_{ij} &= ak_im_j - 2(ak_i - l_i)m_j + a(ak_i - l_i)n_j - 2k_in_j\\
    &= (a - 2)k_im_j + 2l_im_j + (a^2 - 2)k_in_j - al_in_j\\
    &> (a - 2)k_im_j + 2l_im_j + (2a - 2)k_in_j - al_in_j \hsp (a \geq 3)\\
    &= (a - 2)k_im_j + 2l_im_j + (a - 2)k_in_j + a(k_i - l_i)n_j\\
    &> 0. \hsp (a \geq 3, k_i > l_i)
  \end{align*}
  This shows monotonicity.
\end{proof}
If we set $i = 2x$ and $j = 2y$ in \cref{BisGreaterThan2},
we have $B > 2$ when
$w = (r_0 r_1)^x ,\; w' = (r_1 r_0)^y$.
Similarly, if we swap $k$ and $m$, $l$ and $n$
in \cref{BisGreaterThan2} and set $i = 2x + 1$ and $j = 2y + 1$,
we can see that $B > 2$ is also true
when $w = r_1 (r_0 r_1)^x ,\; w' = r_0 (r_1 r_0)^y$.
\begin{prop}
  \label{existLisl2}
  Let $E_0, E_1$ be real positive roots of different types
  (in the sense of $\alpha$ and $\beta$ types).
  If we take the appropriate $c_0, c_1 \in \bbC$,
  then $X = c_0 E_0 + c_1 E_1 ,\; Y = \omega_0 (X) ,\; H = [X, Y]$
  form an $\lisl_2$-triple.
\end{prop}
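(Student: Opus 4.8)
The plan is to reduce to the length-$2$ computation already carried out above and then read off $c_0,c_1$ from the solution of the linear system obtained there. Say $E_0$ is of $\alpha$-type and $E_1$ of $\beta$-type. By \cref{manyRealRootsDoNotMakesl2} and \cref{singleRealRootDoNotMakessl2} only length $2$ is relevant, and $X=c_0E_0+c_1E_1$ is of that form. Since every real root has multiplicity $1$ and lies in the $\mcl{W}$-orbit of $\alpha_0$ or of $\alpha_1$, I would first write $E_0=\lambda_0v_0$, $E_1=\lambda_1v_1$ with $\lambda_0,\lambda_1\in\bbC\setminus\{0\}$ and $v_0,v_1$ of the standard forms $w(e_0)$ or $w(e_1)$, the Weyl word being read off from \cref{posroot}: an $\alpha$-type root vector of even index is a scalar multiple of $(r_0r_1)^x(e_0)$ and one of odd index a multiple of $r_0(r_1r_0)^y(e_1)$, and symmetrically for $\beta$-type. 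Running through the possibilities, a different-type pair $(E_0,E_1)$ is, after relabelling $c_0\leftrightarrow c_1$, one of the configurations $X=c_0w(e_0)+c_1w'(e_1)$, $X=c_0w(e_0)+c_1w'(e_0)$, $X=c_0w(e_1)+c_1w'(e_1)$ with $w,w'$ among the standard Weyl words; the argument is identical in each, so I would carry out the first, which is the case treated in the text above.

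Using \cref{commute} to compute $Y=\omega_0(X)$, the element $H=[X,Y]$ is the sum of a $\lih$-part and the two cross-terms $c_0\ovl{c_1}[w(e_0),w'(f_1)]$ and $\ovl{c_0}c_1[w'(e_1),w(f_0)]$. Since $E_0$ is $\alpha$-type and $E_1$ is $\beta$-type, the two underlying roots are distinct, so the cross-terms lie in a pair of opposite nonzero root spaces and cannot cancel one another; hence $H\in\lih$ forces both of them to vanish. The one step with genuine content is then to verify that the standard Weyl words attached to a different-type pair always fall into the list $w=(r_0r_1)^x,\ w'=(r_1r_0)^y$ or $w=r_1(r_0r_1)^x,\ w'=r_0(r_1r_0)^y$ (and their same-generator analogues) for which this vanishing was checked above; this follows from the description of the $\mcl{W}$-orbits of the real roots. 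Granting it, $H\in\lih$, and $X$, being a sum of positive root vectors, lies in $\lin^+$ with $Y\in\lin^-$; moreover applying $\omega_0$ to $[H,X]=2X$ and using $\omega_0(H)=[\omega_0X,X]=-H$ together with $\omega_0(X)=Y$ gives $[H,Y]=-2Y$, while $[X,Y]=H$ holds by definition. So it remains only to arrange $[H,X]=2X$.

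As computed above, $[H,X]=2X$ amounts to $A\norm{c_0\lambda_0}^2+B\norm{c_1\lambda_1}^2=2$ and $B\norm{c_0\lambda_0}^2+C\norm{c_1\lambda_1}^2=2$, with $A,B,C$ formed from the coordinates $(k,l)$ of $w(\alpha_0)$ and $(m,n)$ of $w'(\alpha_1)$ as above. By \cref{ACis-2}, $A=C=-2$; since $E_0$ and $E_1$ are distinct positive real roots they are linearly independent (a ray from the origin meets $\Deltare$ in at most one point), so $kn-lm\neq0$, whence $B^2-AC>0$ by \cref{B^2-AC} (using $a\geq3$) and $B>2$ by \cref{BisGreaterThan2}. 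Hence the system has the unique solution $\norm{c_0\lambda_0}^2=\norm{c_1\lambda_1}^2=\frac{2}{B-2}>0$, and since this common value is a positive real number I may take, for instance, $c_0=\norm{\lambda_0}^{-1}\sqrt{2/(B-2)}$ and $c_1=\norm{\lambda_1}^{-1}\sqrt{2/(B-2)}$; for this choice $X=c_0E_0+c_1E_1$, $Y=\omega_0(X)$, $H=[X,Y]$ is an $\lisl_2$-triple. The main obstacle is thus the middle step --- checking that a different-type pair is automatically cross-term-free --- which I expect to settle quickly from the orbit structure of the real roots; everything else is assembling the lemmas already proved.
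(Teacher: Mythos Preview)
Your proposal is correct and follows essentially the same route as the paper's proof: reduce to the normalized root vectors $w(e_p)$, sort the different-type pair $(E_0,E_1)$ into the four parity cases according to which generator each comes from, invoke the preceding length-$2$ computation together with \cref{ACis-2} and \cref{BisGreaterThan2} to obtain $\lvert c_0\rvert^2=\lvert c_1\rvert^2=2/(B-2)>0$, and then choose $c_0,c_1$ explicitly. The paper, like you, treats the cross-term vanishing for the listed Weyl words as an assertion and dispatches the two ``same-generator'' configurations by declaring that the discussion of the section repeats verbatim.
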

\begin{proof}
  We can assume that $E_0$ is of type $\alpha$
  and $E_1$ is of type $\beta$.
  The $E_0$'s can be written in the form
  $c w (e_0)$ or $c w (e_1)$ using the constants $c$
  and $w \in \mcl{W}$.
  Considering $c_0$ and $c_1$, we only need to consider
  the root vector of the form $w (e_0)$ or $w (e_1)$.
  $(r_0 r_1)^i (e_0)$ and $r_0 (r_1 r_0)^i (e_1)$
  are of type $\alpha$, while
  $(r_1 r_0)^i (e_1)$ and $r_1 (r_0 r_1)^i (e_0)$
  are of type $\beta$.

  When $E_0 = (r_0 r_1)^i (e_0) ,\; E_1 = (r_1 r_0)^i (e_1)$ or
  when $E_0 = r_0 (r_1 r_0)^i (e_1) ,\; E_1 = r_1 (r_0 r_1)^i (e_0)$,
  we have $B > 2$. Therefore we can take
  $c_0 = c_1 = \sqrt{\frac{2}{B - 2}}$
  and we obtain the conclusion.
  When $E_0 = (r_0 r_1)^i (e_0) ,\; E_1 = r_1 (r_0 r_1)^i (e_0)$ or
  when $E_0 = r_0 (r_1 r_0)^i (e_1) ,\; E_1 = (r_1 r_0)^i (e_1)$,
  we can repeat the discussion of this section in the same way.
\end{proof}
The contents of this section can be summarized as follows.
\begin{thm}
  \label{mainThm}
  Let $X$ be an element of a space spanned by a
  real positive root vector.
  \begin{enumerate}
    \item If the length of $X$ is not 2,
    then $X ,\; Y = \omega_0(X) ,\; H = [X, Y]$ do not form
    an $\lisl_2$-triple.
    \item Suppose the length of $X$ is 2 and
    $E_0, E_1$ are real positive roots of different types
    (in the sense of $\alpha$-type and $\beta$-type).
    If we take appropriate $c_0, c_1 \in \bbC$, then
    $X = c_0 E_0 + c_1 E_1 ,\; Y = \omega_0(X) ,\; H = [X, Y]$
    forms an $\lisl_2$-triple.
  \end{enumerate}
\end{thm}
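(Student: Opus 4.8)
The plan is to obtain \cref{mainThm} by assembling the lemmas and the proposition already proved in this section, organized according to the length $n_X$ of $X$. Note first that an $\lisl_2$-triple requires $X \neq 0$: if $X = 0$ it cannot be the nilpositive element, and otherwise $X$ has a well-defined length $n_X \geq 1$; so in part~1 the hypothesis ``length not $2$'' amounts to $n_X = 1$ or $n_X \geq 3$.

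For part~1 I would treat these two ranges in turn. When $n_X \geq 3$ I would invoke \cref{manyRealRootsDoNotMakesl2}: the relations $[H, E_i] = 2 E_i$ force the roots $\beta_i$ to be collinear in the root plane, while by \cite[Corollary 4.3]{km} they lie on the conic $x^2 - a x y + y^2 = 1$, and a line meets a conic in at most two points. When $n_X = 1$ I would invoke \cref{singleRealRootDoNotMakessl2}: writing $X = c\, w(e_i)$ and using \cref{commute} to get $Y = -\ovl{c}\, w(f_i)$ and $H = -\norm{c}^2 w(h_i)$, the equation $[H, X] = 2X$ reduces to $\norm{c}^2 = -1$, which is impossible. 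Together these establish part~1.

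For part~2 I would cite \cref{existLisl2} directly, since its hypothesis is precisely that $E_0$ and $E_1$ are real positive roots of different ($\alpha$- versus $\beta$-) type. For the record, the mechanism is: after absorbing scalars into $c_0, c_1$ one may take $E_0, E_1$ in the standard Weyl-word normal forms with $E_0$ of type $\alpha$ and $E_1$ of type $\beta$; the cross-bracket terms $[w(e_0), w'(f_1)]$ and $[w'(e_1), w(f_0)]$ appearing in $H = [X, Y]$ then vanish, so $H \in \lih$, and $[H, X] = 2X$ becomes the linear system $A \norm{c_0}^2 + B \norm{c_1}^2 = 2$, $B \norm{c_0}^2 + C \norm{c_1}^2 = 2$. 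By \cref{ACis-2} one has $A = C = -2$, and by \cref{BisGreaterThan2} (with \cref{B^2-AC} ensuring $B^2 - AC \neq 0$) one has $B > 2$, so $\norm{c_0}^2 = \norm{c_1}^2 = 2 / (B - 2) > 0$ and the choice $c_0 = c_1 = \sqrt{2/(B-2)}$ works.

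Since the content of the theorem is already carried by the lemmas, the only thing requiring care — rather than a genuine obstacle — is that the case split in part~2 be exhaustive: one must confirm that any length-$2$ element whose two real positive root vectors are of different types reduces, after rescaling $c_0, c_1$, to one of the configurations $E_0 = (r_0 r_1)^i(e_0),\, E_1 = (r_1 r_0)^i(e_1)$; $E_0 = r_0(r_1 r_0)^i(e_1),\, E_1 = r_1(r_0 r_1)^i(e_0)$; $E_0 = (r_0 r_1)^i(e_0),\, E_1 = r_1(r_0 r_1)^i(e_0)$; or $E_0 = r_0(r_1 r_0)^i(e_1),\, E_1 = (r_1 r_0)^i(e_1)$ handled in \cref{existLisl2}, and that the computations of $A$, $B$, $C$ in \cref{ACis-2} and \cref{BisGreaterThan2} apply uniformly across them. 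I would also point out explicitly that \cref{mainThm} asserts nothing about length-$2$ elements whose two root vectors are of the \emph{same} type, which is precisely the family of cases left unresolved.
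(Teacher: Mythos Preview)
Your proposal is correct and follows essentially the same approach as the paper: the paper's proof is the single sentence ``We can see from \cref{manyRealRootsDoNotMakesl2}, \cref{singleRealRootDoNotMakessl2}, and \cref{existLisl2},'' and your argument simply unpacks these citations with the appropriate case split on $n_X$. Your added remarks about exhaustiveness of the case split in \cref{existLisl2} and about the same-type case being left open are accurate elaborations, not departures.
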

\begin{proof}
  We can see from \cref{manyRealRootsDoNotMakesl2},
  \cref{singleRealRootDoNotMakessl2}, and \cref{existLisl2}.
\end{proof}
\section{weighted Dynkin diagrams}
The Dynkin diagram of the Kac-Moody Lie algebra
we are dealing with is as follows.
\begin{center}
  \begin{tikzcd}[row sep = 0.5ex]
    \circ \ar[r, leftrightarrow, "a"'] & \circ
  \end{tikzcd}
\end{center}
In fact, the two vertices are connected by $a$ line segments,
which are abbreviated as shown in the figure above.
In this section, we compute the weighted Dynkin diagram
corresponding to the $\lisl_2$-triple constructed
in the previous section.
Weighted Dynkin diagram is a Dynkin diagram where each vertex is
labeled, and the label of vertex $i$ is defined as $\alpha_i (H)$.
Since the rank of the Kac-Moody Lie algebra we are considering
is 2, $i = 0, 1$.

In the following, let
$k = F_{i + 1} ,\; l = F_i ,\; m = F_j ,\; n = F_{j + 1}$.
Rewrite $X = c_0 E_0 + c_1 E_1$ and let
$E_0 \in \lig_{k \alpha_0 + l \alpha_1} ,\; E_1 \in \lig_{m \alpha_0 + n \alpha_1}$.
$E_0$ is a root vector of type $\alpha$ and
$E_1$ is a root vector of type $\beta$.
Now if $E \in \lig_{x \alpha_0 + y \alpha_1}$, then
\begin{align*}
  [H, E] &= (x \alpha_0 + y \alpha_1) (H) E\\
  &= (x \alpha_0 (H) + y \alpha_1 (H)) E.
\end{align*}
Recall that $[H, X] = 2X$.
Since $X = c_0 E_0 + c_1 E_1$, we have
\begin{align*}
  [H, E_0] &= 2 E_0,\\
  [H, E_1] &= 2 E_1.
\end{align*}
From $E_0 \in \lig_{k \alpha_0 + l \alpha_1} ,\; E_1 \in \lig_{m \alpha_0 + n \alpha_1}$,
it follows that
\begin{align*}
  \left\{
  \begin{aligned}
    k \alpha_0 (H) + l \alpha_1 (H) = 2\\
    m \alpha_0 (H) + n \alpha_1 (H) = 2.
  \end{aligned}
  \right.
\end{align*}
Solving this gives
\begin{align*}
  \alpha_0 (H) &= \frac{2 (n - l)}{kn - lm},\\
  \alpha_1 (H) &= \frac{2 (k - m)}{kn - lm}.
\end{align*}
Since $k > l$ and $n > m$, we have $kn - lm > 0$.
Therefore, the weighted Dynkin diagram is as follows.
\begin{center}
  \begin{tikzcd}[row sep = 0.5ex]
    \disp\frac{2(F_{j + 1} - F_i)}{F_{i + 1}F_{j + 1} - F_i F_j} &
    \disp\frac{2(F_{i + 1} - F_j)}{F_{i + 1}F_{j + 1} - F_i F_j}\\
    \circ \ar[r, leftrightarrow, "a"'] & \circ
  \end{tikzcd}
\end{center}
In the general Kac-Moody Lie algebra,
we say $h \in \lih$ is dominant if $h$ satisfies
$\alpha_i (h) \geq 0$ for any $i$.
In the finite type case, for any $\lisl_2$ triple,
we can transform it by the action of
appropriate element of the Weyl group
so that $H$ is dominant.
In the case of a rank 2 hyperbolic Kac-Moody Lie algebras,
if $H$ is dominant, then when we decompose $\lig$
into the direct sum of eigenspaces with respect to the adjoint action
of $H$, the dimension of the eigenspaces
corresponding to each eigenvalue will be finite.

With this motivation, we aim to classify
cases where $H$ is dominant.
When $H$ is dominant, each weight in the weighted Dynkin diagram
is greater than or equal to 0.
We do not consider whether an $\lisl_2$-triple can be constructed
when $X$ can be written as the sum of two root vectors
of the same type(in the sense of $\alpha$- and $\beta$- types),
but even if it can, we can show that $H$ is not dominant.
\begin{lem}
  \label{sameTypeVectorsDoNotMakeHDominantLem}
  Suppose $X = c_0 E_0 + c_1 E_1$ and that $E_0$ and $E_1$
  are real root vectors of the same type
  (in the sense of $\alpha$- and $\beta$- type)
  and the root to which $E_0$ and $E_1$ belong is different.
  If $X ,\; Y = \omega_0 (X) ,\; H = [X, Y]$ form
  an $\lisl_2$-triple, then $H$ is not dominant.
\end{lem}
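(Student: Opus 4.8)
The plan is to reduce the claim to a sign obstruction on the two weights $\alpha_0(H)$ and $\alpha_1(H)$. Since $X \in \lin^+$, the vectors $E_0, E_1$ are non-zero real \emph{positive} root vectors, and because the Dynkin diagram is symmetric we may assume both are of $\alpha$-type; the $\beta$-type case is handled by the same argument with the roles of $\alpha_0$ and $\alpha_1$ interchanged. By \cref{posroot} we may therefore write $E_0 \in \lig_{F_{i+1}\alpha_0 + F_i\alpha_1}$ and $E_1 \in \lig_{F_{j+1}\alpha_0 + F_j\alpha_1}$ for some $i, j \in \bbZ_{\geq 0}$, and the hypothesis that $E_0$ and $E_1$ belong to different roots forces $i \neq j$; after relabelling we assume $i < j$.

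First I would extract the equations $H$ must satisfy. Because $X, Y, H$ is an $\lisl_2$-triple we have $H \in \lih$ by definition, so $\ad H$ acts on $\lig_{F_{i+1}\alpha_0 + F_i\alpha_1}$ and on $\lig_{F_{j+1}\alpha_0 + F_j\alpha_1}$ by the scalars $(F_{i+1}\alpha_0 + F_i\alpha_1)(H)$ and $(F_{j+1}\alpha_0 + F_j\alpha_1)(H)$. Writing out $[H,X] = 2X$ with $X = c_0 E_0 + c_1 E_1$ and using $c_0, c_1 \neq 0$ together with the fact that $E_0$ and $E_1$ span distinct root spaces, we obtain
\begin{align*}
  F_{i+1}\alpha_0(H) + F_i\alpha_1(H) &= 2,\\
  F_{j+1}\alpha_0(H) + F_j\alpha_1(H) &= 2.
\end{align*}
Subtracting yields $(F_{j+1} - F_{i+1})\alpha_0(H) + (F_j - F_i)\alpha_1(H) = 0$. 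Next I would observe that $\{F_k\}$ is strictly increasing on $\bbZ_{\geq 0}$: indeed $F_0 = 0 < 1 = F_1$, and if $0 \leq F_k < F_{k+1}$ then $F_{k+2} = aF_{k+1} - F_k \geq 3F_{k+1} - F_k > 2F_{k+1} > F_{k+1}$ because $a \geq 3$. Hence both coefficients $F_{j+1} - F_{i+1}$ and $F_j - F_i$ are strictly positive. If $H$ were dominant we would have $\alpha_0(H) \geq 0$ and $\alpha_1(H) \geq 0$, and then a sum of two products of positive and non-negative numbers can vanish only if $\alpha_0(H) = \alpha_1(H) = 0$, contradicting $F_{i+1}\alpha_0(H) + F_i\alpha_1(H) = 2$. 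Therefore $H$ is not dominant.

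I do not expect a serious obstacle here: the two points needing care are that the hypothesis "$E_0, E_1$ belong to different roots" is exactly what guarantees $i \neq j$ (so that the subtracted equation is nontrivial), and that $a \geq 3$ is what makes the recurrence genuinely strictly increasing. An alternative would be to quote the explicit formulas of \S 6 directly --- for two $\alpha$-type vectors one finds $\alpha_0(H) = 2(F_j - F_i)/(F_{i+1}F_j - F_iF_{j+1})$ and $\alpha_1(H) = 2(F_{i+1} - F_{j+1})/(F_{i+1}F_j - F_iF_{j+1})$, whose numerators have opposite nonzero signs, so $\alpha_0(H)$ and $\alpha_1(H)$ cannot both be non-negative --- but the subtraction argument above is cleaner since it avoids checking that the common denominator is non-zero.
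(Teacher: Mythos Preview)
Your argument is correct and follows essentially the same approach as the paper: both derive the two linear constraints $F_{i+1}\alpha_0(H)+F_i\alpha_1(H)=2$ and $F_{j+1}\alpha_0(H)+F_j\alpha_1(H)=2$ and exploit the strict monotonicity of $\{F_k\}$ to force one of $\alpha_0(H),\alpha_1(H)$ to be negative. The only cosmetic difference is that the paper plugs into the solved formulas $\alpha_0(H)=\dfrac{2(n-l)}{kn-lm}$, $\alpha_1(H)=\dfrac{2(k-m)}{kn-lm}$ and reads off the opposite signs of the numerators, whereas you subtract the two equations directly; your variant is indeed slightly cleaner since it sidesteps verifying $kn-lm\neq 0$.
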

\begin{proof}
  Suppose that $E_0$ and $E_1$ are both of $\alpha$-type.
  Let $k = F_{i + 1} ,\; l = F_i ,\; m = F_{j + 1} ,\; n = F_j$,
  and write
  $E_0 \in \lig_{k \alpha_0 + l \alpha_1} ,\; E_1 \in \lig_{m \alpha_0 + n \alpha_1}$.
  The roots to which $E_0$ and $E_1$ belong are different.
  Therefore $i \neq j$.
  Repeating the above calculation when $E_0$ is of $\alpha$-type and
  $E_1$ is of $\beta$-type, we get
  \begin{align*}
    \alpha_0 (H) &= \frac{2 (n - l)}{kn - lm},\\
    \alpha_1 (H) &= \frac{2 (k - m)}{kn - lm}.
  \end{align*}
  However, $n - l = F_j - F_i$ and $k - m = F_{i + 1} - F_{j + 1}$ are
  both not equal to zero and have different signs.
  Therefore, either $\alpha_0 (H)$ or $\alpha_1 (H)$
  will be negative, and $H$ will not be dominant.
\end{proof}
\begin{lem}
  \label{dominantLem}
  Of the $\lisl_2$-triples created by \cref{existLisl2},
  $H$ is dominant if and only if
  $i = j - 1 ,\; j ,\; j + 1$.
\end{lem}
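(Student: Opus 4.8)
The plan is to read off the dominance condition directly from the weighted Dynkin diagram already computed in this section. For the $\lisl_2$-triples produced by \cref{existLisl2} one may write $E_0 \in \lig_{k\alpha_0 + l\alpha_1}$ of type $\alpha$ and $E_1 \in \lig_{m\alpha_0 + n\alpha_1}$ of type $\beta$ with $k = F_{i+1}$, $l = F_i$, $m = F_j$, $n = F_{j+1}$ for suitable $i, j \in \bbZ_{\geq 0}$, and in this notation
\begin{align*}
  \alpha_0(H) = \frac{2(F_{j+1} - F_i)}{F_{i+1}F_{j+1} - F_i F_j}, \hsp
  \alpha_1(H) = \frac{2(F_{i+1} - F_j)}{F_{i+1}F_{j+1} - F_i F_j}.
\end{align*}
Since the common denominator $F_{i+1}F_{j+1} - F_i F_j = kn - lm$ is strictly positive, $H$ is dominant if and only if both numerators are nonnegative, that is, if and only if $F_{j+1} \geq F_i$ and $F_{i+1} \geq F_j$. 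So the whole statement reduces to comparing values of the sequence $\{F_k\}$.

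The only computation needed is that $\{F_k\}_{k \geq 0}$ is strictly increasing, which I would prove by a short induction on the recursion $F_{k+2} = aF_{k+1} - F_k$: one has $F_0 = 0 < 1 = F_1$, and if $0 \leq F_k < F_{k+1}$ (so $F_{k+1} \geq 1$), then, using $a \geq 3$,
\begin{align*}
  F_{k+2} - F_{k+1} = (a - 1)F_{k+1} - F_k \geq 2F_{k+1} - F_k > F_{k+1} > 0.
\end{align*}
One also records that negative indices need not be considered: by \cref{posroot} they would give non-positive roots, which do not occur in \cref{existLisl2}, so indeed $i, j \geq 0$; and the positivity of $F_{i+1}F_{j+1} - F_i F_j$ itself is a consequence of the same monotonicity, as was noted when the diagram was computed.

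With strict monotonicity in hand, $F_{j+1} \geq F_i \Longleftrightarrow j + 1 \geq i$ and $F_{i+1} \geq F_j \Longleftrightarrow i + 1 \geq j$, so $H$ is dominant exactly when $j - 1 \leq i \leq j + 1$, i.e. $i \in \{\, j-1,\ j,\ j+1 \,\}$, with the obvious proviso that $i = j-1$ is available only for $j \geq 1$ (the case $j = 0$ giving $i \in \{0,1\}$). I do not expect a genuine obstacle: once the formulas for $\alpha_0(H)$, $\alpha_1(H)$ and the positivity of their denominator are granted, the statement is merely the translation of two numerator inequalities through the monotonicity of $\{F_k\}$, and the only points deserving a word of care are the one-line induction above and the boundary case $j = 0$.
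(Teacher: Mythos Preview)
Your argument is correct and is essentially the paper's own proof: the paper likewise reads off $\alpha_0(H)$ and $\alpha_1(H)$ from the formulas computed just before the lemma, uses $kn - lm > 0$, and translates $F_{j+1} - F_i \geq 0$ and $F_{i+1} - F_j \geq 0$ into $j - 1 \leq i \leq j + 1$ via monotonicity of $\{F_k\}$. The only difference is that you spell out the one-line induction for strict monotonicity, whereas the paper takes it for granted.
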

\begin{proof}
  In order for $\alpha_0 (H) \geq 0$ to be true,
  $n - l = F_{j + 1} - F_i \geq 0$ should be true.
  Thus we have $i \leq j + 1$.

  Also, in order for $\alpha_1 (H) \geq 0$ to be true,
  $k - m = F_{i + 1} - F_j \geq 0$ should be true.
  Thus we have $i + 1 \geq j$.

  Putting these together, we get $j - 1 \leq i \leq j + 1$.
  The converse is obvious.
\end{proof}
\begin{prop}
  \label{dominantProp}
  The $\lisl_2$-triple constructed in \cref{existLisl2} can be
  transformed under the appropriate action of an element of Weyl gorup
  so that $H$ is dominant.
\end{prop}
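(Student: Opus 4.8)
The plan is to understand how the fundamental reflections $r_0$ and $r_1$ permute the $\lisl_2$-triples produced in \cref{existLisl2}. I index these triples by pairs $(i,j) \in \bbZ_{\geq 0}^2$: let $T_{ij}$ be the triple whose $X = c_0 E_0 + c_1 E_1$ has its $\alpha$-type component in $\lig_{F_{i+1}\alpha_0 + F_i\alpha_1}$ and its $\beta$-type component in $\lig_{F_j\alpha_0 + F_{j+1}\alpha_1}$, with neutral element $H_{ij}=[X,\omega_0(X)]$. The first step is a short computation using the recurrence in the form $F_{k-1} = aF_k - F_{k+1}$, which gives
\begin{align*}
  r_0(F_{i+1}\alpha_0 + F_i\alpha_1) &= F_{i-1}\alpha_0 + F_i\alpha_1,\\
  r_0(F_j\alpha_0 + F_{j+1}\alpha_1) &= F_{j+2}\alpha_0 + F_{j+1}\alpha_1,
\end{align*}
together with the analogous formulas for $r_1$ obtained by interchanging $\alpha_0$ and $\alpha_1$. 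Hence, when $i \geq 1$, the reflection $r_0$ carries the $i$-th $\alpha$-type positive root to the $(i-1)$-th $\beta$-type positive root and the $j$-th $\beta$-type positive root to the $(j+1)$-th $\alpha$-type one; symmetrically, when $j \geq 1$, $r_1$ carries the $j$-th $\beta$-type positive root to the $(j-1)$-th $\alpha$-type one and the $i$-th $\alpha$-type positive root to the $(i+1)$-th $\beta$-type one.

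The next step promotes this to an action on the triples. Since $r_0 \in \Aut\lig$ and $r_0\omega_0 = \omega_0 r_0$ by \cref{commute}, $r_0$ sends a triple of the shape $X,\ \omega_0(X),\ [X,\omega_0(X)]$ to another of the same shape; and when $i \geq 1$, the root computation above shows that $r_0(X)$ still lies in the span of the real positive root vectors and still has exactly one $\alpha$-type component (of index $j+1$) and one $\beta$-type component (of index $i-1$). Thus $r_0(T_{ij}) = T_{j+1,\,i-1}$ and $r_0(H_{ij}) = H_{j+1,\,i-1}$; likewise $r_1(T_{ij}) = T_{j-1,\,i+1}$ when $j \geq 1$. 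That the transformed coefficients are again admissible is automatic, since for any triple of this shape the values $\norm{c_0}^2 = \norm{c_1}^2$ are forced by the two roots through the linear system used to construct these triples.

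With these two moves available, the proposition follows by induction on $\norm{i-j}$. By \cref{dominantLem}, $H_{ij}$ is dominant precisely when $\norm{i-j} \leq 1$, which is the base case (take $w = \id$). If $i - j \geq 2$, then $i \geq 2$, so $r_0$ sends $T_{ij}$ to $T_{j+1,\,i-1}$, whose indices are again nonnegative and satisfy $\norm{(j+1)-(i-1)} = (i-j) - 2 < i-j$; if $j - i \geq 2$, then $j \geq 2$, and $r_1$ sends $T_{ij}$ to $T_{j-1,\,i+1}$ with $\norm{(j-1)-(i+1)} = (j-i) - 2 < j-i$. In either case the inductive hypothesis produces $w' \in \mcl{W}$ making the neutral element of the new triple dominant, and then $w' r_0$ (respectively $w' r_1$) makes $H_{ij}$ dominant.

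The Fibonacci identities in the first step are routine, so the part requiring real care is the second step: verifying that applying $r_0$ with $i \geq 1$ genuinely returns the triple to the positive-root family of \cref{existLisl2}, so that \cref{dominantLem} can be quoted for the image. Once that bookkeeping is settled, the induction closes by the elementary but essential observation that $i - j \geq 2$ forces $i \geq 1$ and $j - i \geq 2$ forces $j \geq 1$, so the reflection one needs is always legitimate and never drags $X$ out of the real positive root span.
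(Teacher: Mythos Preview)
Your proof is correct and follows essentially the same approach as the paper's own argument: both compute that $r_0$ and $r_1$ swap the $\alpha$- and $\beta$-type roots while shifting the index pair $(i,j)$ to $(j+1,i-1)$ and $(j-1,i+1)$ respectively, and then iterate to drive $\lvert i-j\rvert$ down to $0$ or $1$, at which point \cref{dominantLem} applies. Your version is slightly more careful in two places---you explicitly invoke \cref{commute} to check that the Weyl element sends a triple of shape $(X,\omega_0(X),[X,\omega_0(X)])$ to another of the same shape, and you flag the positivity constraint $i\geq 1$ (resp.\ $j\geq 1$) needed for $r_0$ (resp.\ $r_1$) to keep $X$ in the positive real root span---but the underlying mechanism is identical to the paper's.
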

\begin{proof}
  If $H$ is originally dominant, there is no need to transform it.
  Otherwise, It becomes $\norm{i - j} \geq 2$.
  
  Now we know
  \begin{align*}
    e_0 &\in \lig_{F_1 \alpha_0 + F_0 \alpha_1}, \hsp \text{$\alpha$-type}\\
    r_1(e_0) &\in \lig_{F_1 \alpha_0 + F_2 \alpha_1}, \hsp \text{$\beta$-type}\\
    r_0r_1(e_0) &\in \lig_{F_3 \alpha_0 + F_2 \alpha_1}, \hsp \text{$\alpha$-type}\\
    &\vdots
  \end{align*}
  and
  \begin{align*}
    e_1 &\in \lig_{F_0 \alpha_0 + F_1 \alpha_1}, \hsp \text{$\beta$-type}\\
    r_0(e_1) &\in \lig_{F_2 \alpha_0 + F_1 \alpha_1}, \hsp \text{$\alpha$-type}\\
    r_1r_0(e_1) &\in \lig_{F_2 \alpha_0 + F_3 \alpha_1}. \hsp \text{$\beta$-type}\\
    &\vdots
  \end{align*}
  Since $E_0$ is of $\alpha$-type and $E_1$ is of $\beta$-type,
  we can write
  \begin{align*}
    \begin{aligned}
      E_0 &= r_0 r_1 r_0 \cdots r_p (r_{1 - p})
      \in \lig_{F_{i + 1} \alpha_0 + F_i \alpha_1},\\
      E_1 &= r_1 r_0 r_1 \cdots r_q (r_{1 - q})
      \in \lig_{F_j \alpha_0 + F_{j + 1} \alpha_1}.
    \end{aligned}
    \hsp (p, q = 0 \; \text{or} \; 1)
  \end{align*}
  If $r_0$ acts on these elements,
  \begin{align*}
    r_0 (E_0) &= \hphantom{r_1 r_0} r_1 r_0 \cdots r_p (r_{1 - p}) \in \lig_{F_{i - 1} \alpha_0 + F_i \alpha_1},\\
    r_0 (E_1) &= r_0 r_1 r_0 r_1 \cdots r_q (r_{1 - q}) \in \lig_{F_{j + 2} \alpha_0 + F_{j + 1} \alpha_1}.
  \end{align*}
  Keeping in mind that $r_0 (E_0)$ is of $\beta$-type
  and $r_0 (E_1)$ is of $\alpha$-type,
  comparing these with the above, we can see that the number
  corresponding to $i$ is $j + 1$.
  Also, if $r_1$ acts on $E_0$ and $E_1$,
  \begin{align*}
    r_1 (E_0) &= r_1 r_0 r_1 r_0 \cdots r_p (r_{1 - p}) \in \lig_{F_{i + 1} \alpha_0 + F_{i + 2} \alpha_1},\\
    r_1 (E_1) &= \hphantom{r_0 r_1} r_0 r_1 \cdots r_q (r_{1 - q}) \in \lig_{F_j \alpha_0 + F_{j - 1} \alpha_1}.
  \end{align*}
  In this case, the number corresponding to $i$ is $j - 1$
  and the number corresponding to $j$ is $i + 1$.
  From this, $i - j$ becomes $j - i + 2$ under the action of $r_0$,
  and $j - i - 2$ under the action of $r_1$.
  If $i - j \geq 2$, $\norm{i - j}$ decreases by 2
  when $r_0$ is applied.
  If $j - i \geq 2$, $\norm{i - j}$ decreases by 2
  when $r_1$ is applied.
  By repeating this prosess, we can change $\norm{i - j}$ to
  0 or 1.
\end{proof}
Put together \cref{sameTypeVectorsDoNotMakeHDominantLem}
and \cref{dominantLem} to get the following.
\begin{thm}
  \label{dominantThm}
  The $\lisl_2$-triple $\{ X, Y, H \}$ of $\lig$,
  where $X$ is in the space spanned by real root vectors,
  $Y = \omega_0 (X)$, and $H$ is dominant,
  are all those listed in \cref{dominantLem}. \qed
\end{thm}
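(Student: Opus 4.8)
The plan is to reduce \cref{dominantThm} to the classification already assembled in the preceding lemmas, so the proof is essentially a matter of assembling the pieces in the right logical order. First I would invoke \cref{mainThm}\crefalias{enumi}{}: the only length that can possibly produce an $\lisl_2$-triple of the required form is $2$, so from the outset $X = c_0 E_0 + c_1 E_1$ for two distinct real positive root vectors $E_0, E_1$. This dichotomizes the argument according to whether $E_0$ and $E_1$ have the same type (both $\alpha$-type or both $\beta$-type) or different types.

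In the same-type case I would appeal directly to \cref{sameTypeVectorsDoNotMakeHDominantLem}: whenever $X$ is a combination of two distinct real root vectors of the same type and $\{X, Y, H\}$ is an $\lisl_2$-triple with $Y = \omega_0(X)$, the element $H$ fails to be dominant, because $n - l = F_j - F_i$ and $k - m = F_{i+1} - F_{j+1}$ are nonzero of opposite signs, forcing one of $\alpha_0(H), \alpha_1(H)$ to be negative. Hence no same-type $X$ contributes to the list of dominant triples, and it is irrelevant whether such $X$ even yields an $\lisl_2$-triple.

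In the different-type case, \cref{existLisl2} guarantees that for appropriate $c_0, c_1$ the triple does exist, and \cref{dominantLem} pins down exactly when $H$ is dominant: precisely when $i = j-1, j, j+1$, i.e.\ $|i - j| \le 1$. So the dominant triples arising here are exactly those enumerated in \cref{dominantLem}. Combining the two cases: every $\lisl_2$-triple $\{X, Y, H\}$ with $X$ in the span of the real root vectors, $Y = \omega_0(X)$, and $H$ dominant must come from a different-type length-$2$ element with $|i-j| \le 1$, and conversely each such choice does produce such a triple by \cref{existLisl2} and \cref{dominantLem}. This is the content of the theorem.

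I do not anticipate a genuine obstacle here, since all the hard work — the length restriction, the nonexistence results for lengths $1$ and $\ge 3$, the explicit solvability condition $B > 2$, and the sign analysis of $\alpha_i(H)$ — has been discharged in the earlier lemmas; the only care needed is to make sure the same-type subcase is handled by \cref{sameTypeVectorsDoNotMakeHDominantLem} without circularly assuming an $\lisl_2$-triple exists there, and that the ``different types but both of the form $w(e_0)$ or both $w(e_1)$'' configurations mentioned at the end of the proof of \cref{existLisl2} are also covered by \cref{dominantLem} via the same formulas for $\alpha_0(H), \alpha_1(H)$. Given the short ``\verb|\qed|'' already attached to the statement in the excerpt, the intended proof is indeed just this two-line synthesis of \cref{sameTypeVectorsDoNotMakeHDominantLem} and \cref{dominantLem}.
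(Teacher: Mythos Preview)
Your proposal is correct and matches the paper's own argument: the proof given there is literally the one-line remark ``Put together \cref{sameTypeVectorsDoNotMakeHDominantLem} and \cref{dominantLem}'' preceding the statement, with \cref{mainThm} tacitly supplying the length-$2$ reduction that you make explicit. Your added care about the same-type subcase and the end-of-\cref{existLisl2} configurations only spells out what the paper leaves implicit.
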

When $i = j$, the weighted Dynkin diagram is as follows.
\begin{center}
  \begin{tikzcd}[row sep = 0.5ex]
    \disp\frac{2}{F_{i + 1} + F_i} &
    \disp\frac{2}{F_{i + 1} + F_i}\\
    \circ \ar[r, leftrightarrow, "a"'] & \circ
  \end{tikzcd}
\end{center}
When $i = j + 1$, it is as follows.
\begin{center}
  \begin{tikzcd}[row sep = 0.5ex]
    0 & \disp\frac{2}{F_i}\\
    \circ \ar[r, leftrightarrow, "a"'] & \circ
  \end{tikzcd}
\end{center}
When $i = j - 1$, it is as follows.
\begin{center}
  \begin{tikzcd}[row sep = 0.5ex]
    \disp\frac{2}{F_{i + 1}} & 0\\
    \circ \ar[r, leftrightarrow, "a"'] & \circ
  \end{tikzcd}
\end{center}
\section{eigenvalues of the action of Casimir element on $\lih$}
The Casimir element $c$ of a finite-dimensional semisimple Lie
algebra $\lig_0$ is the element
\begin{align*}
  c = \sum_i x_i y_i \in U (\lig_0),
\end{align*}
where $(\cdot, \cdot)$ is the Killing form,
$\{ x_i \}$ is the basis of $\lig_0$, and
$\{ y_i \}$ is the dual basis with respect to this basis
and the Killing form.
$U (\lig_0)$ represents the universal enveloping algebra
of $\lig_0$.
When considering the action of $\lig_0$ on a $\lig_0$-module $L$,
the action of Casimir element is commutative with
any action of element of $\lig_0$.
When $L$ is an irreducible module,
from Schur's lemma, the action of Casimir element is
a scalar multiplication.

When decomposing $\lig$ by the action of
the $\lisl_2$ subalgebra created in the previous section,
from $\dim \lih = 2$, there are two irreducible modules
that pass through $\lih$.
In Particular, one is the $\lisl_2$ subalgebra itself.
In this section, we will find the eigenvalues of
the Casimir element when it acts on $\lih$ with
adjoint action, and we will determine how many times
the action of the Casimir element makes
the two irreducible module.
Let $c_L$ be the Casimir element in the $\lisl_2$ subalgebra
constructed above,
\begin{align*}
  c_L &= \frac{1}{8}H^2 + \frac{1}{4}XY + \frac{1}{4}YX\\
  &= \frac{1}{8}H^2 + \frac{1}{4}H + \frac{1}{2}YX.
\end{align*}
For the $\lisl_2$-triple constructed in \cref{existLisl2},
we will calculate the eigenvalue of the Casimir element.
Let $X = c_0 E_0 + c_1 E_1$, where $E_0$ is of type $\alpha$
and $E_1$ is of type $\beta$.
Using $p, q \in \{ 0, 1 \}$, we can write
$E_0 = w (e_{p}) ,\; E_1 = w' (e_{q})$.
Let $E_0 \in \lig_{k\alpha_0 + l\alpha_1} ,\; E_1 \in \lig_{m \alpha_0 + n \alpha_1}$,
we can write
$k = F_{i + 1} ,\; l = F_i ,\; m = F_j ,\; n = F_{j + 1}$.
We have
\begin{align*}
  c_L(h_0) &= \frac{1}{2} [Y, [X, h_0]]\\
  &= \frac{1}{2} [Y, [c_0 w(e_{p}) + c_1 w'(e_{q}), h_0]]\\
  &= \frac{1}{2} [Y, - (2k - al) c_0 w(e_{p}) - (2m - an) c_1 w'(e_{q})]\\
  &= \frac{1}{2} [(al - 2k) c_0 w(e_{p}) + (an - 2m) c_1 w'(e_{q}), \ovl{c_0} w(f_{p}) + \ovl{c_1} w'(f_{q})].
\end{align*}
Now since 
both $[w(e_{p}), w'(f_{q})]$ and $[w'(e_{q}), w(f_{p})]$ are 0,
we have
\begin{align*}
  c_L(h_0) &= \frac{1}{2} ((al - 2k) \norm{c_0}^2 w(h_{p}) + (an - 2m) \norm{c_1}^2 w'(h_{q}))\\
  &= \frac{1}{2} ((al - 2k) \norm{c_0}^2 (kh_0 + lh_1) + (an - 2m) \norm{c_0}^2 (mh_0 + nh_1))\\
  &= \frac{1}{2} ((al - 2k) k + (an - 2m) m) \norm{c_0}^2h_0
  + \frac{1}{2} ((al - 2k) l + (an - 2m) n) \norm{c_0}^2h_1.
\end{align*}
Similarly,
\begin{align*}
  c_L(h_1) &= \frac{1}{2} [Y, [X, h_1]]\\
  &= \frac{1}{2} [Y, [c_0 w(e_{p}) + c_1 w'(e_{q}), h_1]]\\
  &= \frac{1}{2} [Y, - (-ak + 2l) c_0 w(e_{p}) - (-am + 2n) c_1 w'(e_{q})]\\
  &= \frac{1}{2} [(ak - 2l) c_0 w(e_{p}) + (am - 2n) c_1 w'(e_{q}), \ovl{c_0} w(f_{p}) + \ovl{c_1} w'(f_{q})]\\
  &= \frac{1}{2} (ak - 2l) \norm{c_0}^2 w'(h_{p}) + (am - 2n) \norm{c_1}^2 w'(h_{q})\\
  &= \frac{1}{2} (ak - 2l) \norm{c_0}^2 (kh_0 + lh_1) + (am - 2n) \norm{c_0}^2 (mh_0 + nh_1)\\
  &= \frac{1}{2} ((ak - 2l) k + (am - 2n) m)\norm{c_0}^2h_0
  + ((ak - 2l) l + (am - 2n) n)\norm{c_0}^2h_1.
\end{align*}
The eigenvalues of the action of $c_L$ are
eigenvalues of
\begin{align*}
  \frac{1}{2} \norm{c_0}^2
  \begin{pmatrix}
    (al - 2k)k + (an - 2m)m & (ak - 2l)k + (am - 2n)m\\
    (al - 2k)l + (an - 2m)n & (ak ^ 2l)l + (am - 2n)n
  \end{pmatrix}.
\end{align*}
For simplicity, let
\begin{align*}
  P &= (al - 2k)k + (an - 2m)m,\\
  Q &= (ak - 2l)k + (am - 2n)m,\\
  R &= (al - 2k)l + (an - 2m)n,\\
  S &= (ak - 2l)l + (am - 2n)n.
\end{align*}
Solving for
\begin{align*}
  \begin{vmatrix}
    X - P & -Q\\
    -R & X - S
  \end{vmatrix}
  = 0,
\end{align*}
we get
\begin{align*}
  X &= \frac{(P + S) \pm \sqrt{(P + S)^2 - 4PS + 4QR}}{2}\\
\end{align*}
and the eigenvalues are
\begin{align*}
  \frac{(P + S) \pm \sqrt{(P + S)^2 - 4PS + 4QR}}{4} \norm{c_0}^2.
\end{align*}
we will write this as $E^\pm$.
\begin{lem}
  \label{qrps}
  $P + S = -4$ and $QR - PS = B^2 - 4$.
\end{lem}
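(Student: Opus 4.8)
The plan is to establish the two identities separately, each by a short computation that recycles the quantities $A$, $B$, $C$ from \cref{B^2-AC} and the relation $A = C = -2$ from \cref{ACis-2}.

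For $P + S$, I would simply add the four defining expressions. Using $k = F_{i+1}$, $l = F_i$, $m = F_j$, $n = F_{j+1}$ and collecting terms,
\begin{align*}
  P + S &= (al - 2k)k + (an - 2m)m + (ak - 2l)l + (am - 2n)n\\
  &= (2akl - 2k^2 - 2l^2) + (2amn - 2m^2 - 2n^2)\\
  &= A + C = -4,
\end{align*}
the last equality being \cref{ACis-2}.

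For $QR - PS$, the key step is to notice the matrix factorization
\begin{align*}
  \begin{pmatrix} P & Q\\ R & S \end{pmatrix}
  = \begin{pmatrix} k & m\\ l & n \end{pmatrix}
    \begin{pmatrix} al - 2k & ak - 2l\\ an - 2m & am - 2n \end{pmatrix},
\end{align*}
which is immediate once the entries $P, Q, R, S$ are written out. Taking determinants of both sides,
\begin{align*}
  PS - QR = (kn - lm)\bigl((al - 2k)(am - 2n) - (ak - 2l)(an - 2m)\bigr).
\end{align*}
Expanding the bracket, the cross terms involving $akm$ and $aln$ cancel, leaving $a^2 lm - a^2 kn + 4kn - 4lm = -(a^2 - 4)(kn - lm)$. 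Hence $QR - PS = (a^2 - 4)(kn - lm)^2$, which by \cref{B^2-AC} equals $B^2 - AC$; since $A = C = -2$ we have $AC = 4$, so $QR - PS = B^2 - 4$.

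Both computations are routine; the only part requiring a moment's thought is recognizing the matrix factorization, which replaces a four-term expansion of $QR - PS$ by a single $2 \times 2$ determinant followed by one small cancellation. That is where I expect the (minor) difficulty to lie.
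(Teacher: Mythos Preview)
Your proof is correct. The computation of $P+S = A+C = -4$ is identical to the paper's. For $QR-PS$, the paper proceeds by direct expansion: it writes out the four-term products $QR$ and $PS$, observes that the $kl$- and $mn$-terms cancel pairwise, and factors the remainder as $\bigl[(ak-2l)(an-2m)-(al-2k)(am-2n)\bigr](kn-lm)$, which after expansion gives $(a^2-4)(kn-lm)^2 = B^2-AC = B^2-4$. Your matrix factorization arrives at exactly this same intermediate expression via the multiplicativity of determinants, bypassing the need to write out and then cancel the redundant terms. The two arguments are therefore computationally equivalent; your packaging is slightly more economical, while the paper's direct expansion makes the cancellation visible without requiring the reader to spot the factorization.
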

\begin{proof}
  this lemma is shown from
  \begin{align*}
    P + S &= akl - 2k^2 + amn - 2m^2 + akl - 2l^2 + amn - 2n^2\\
    &= A + C\\
    &= -4\\
  \end{align*}
  and
  \begin{align*}
    QR &= (ak - 2l)(al - 2k)kl + (ak - 2l)(an - 2m)kn\\
    & \hphantom{{}={}} + (am - 2n)(al - 2k)lm + (am - 2n)(an - 2m)mn\\
    PS &= (al - 2k)(ak - 2l)kl + (al - 2k)(am - 2n)kn\\
    & \hphantom{{}={}} + (an - 2m)(ak - 2l)lm + (an - 2m)(am - 2n)mn\\
    QR - PS &= (ak - 2l)(an - 2m)(kn - lm) + (al - 2k)(am - 2n)(lm - kn)\\
    &= (a^2kn - 2akm - 2aln + 4lm - a^2lm + 2aln + 2akm - 4kn)(kn - lm)\\
    &= (a^2 - 4)(kn - lm)^2\\
    &= B^2 - AC \hsp (\text{from \cref{B^2-AC}})\\
    &= B^2 - 4.
  \end{align*}
\end{proof}
From \cref{qrps}, we have
\begin{align*}
  E^\pm &= \frac{(P + S) \pm \sqrt{(P + S)^2 + 4(QR - PS)}}{4}\norm{c_0}^2\\
  &= \frac{-4 \pm \sqrt{16 + 4(B^2 - 4)}}{4}\norm{c_0}^2\\
  &= \frac{-4 \pm 2 \norm{B}}{4}\norm{c_0}^2\\
  &= \frac{-2 \pm B}{2} \cdot \frac{2}{B - 2} \hsp (\text{from \cref{BisGreaterThan2}, $B > 2$})\\
  &= - \frac{B + 2}{B - 2} ,\; 1.
\end{align*}
The eigenvalue 1 corresponds to the $\lisl_2$ subalgebra itself,
and the other eigenvalue corresponds to the other
irreducible component. Let
\begin{align*}
  E^+ = - \frac{B + 2}{B - 2}
\end{align*}
and we will consider the range of values of $E^+$.
Since $B > 2$ from \cref{BisGreaterThan2},
$E^+$ is strictly increasing with respect to $B$.
Also, from \cref{BisGreaterThan2},
$B$ is strictly increasing
with respect to $i$ and $j$ respectively.
When $i = j = 0$, we have $B = a$ and
\begin{align*}
  E^+ = - \frac{a + 2}{a - 2}.
\end{align*}
When $B > 2$, we have $E^+ < -1$ and
\begin{align*}
  - \frac{a + 2}{a - 2} \leq E^+ < -1,
\end{align*}
We will do some more caluculations on the value of $B$.
\begin{lem}
  \label{Btoinfty}
  For any $i$,
  \begin{align*}
    \lim_{j \to \infty} B = \infty
  \end{align*}
  and for any $j$,
  \begin{align*}
    \lim_{i \to \infty} B = \infty.
  \end{align*}
\end{lem}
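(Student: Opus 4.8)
The plan is to reduce to a single monotone sequence in one of the two indices and to bound its consecutive differences from below by a positive constant, thereby forcing at least linear growth.

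First I would write out the closed form coming from $B = alm - 2km + akn - 2ln$ with $k = F_{i+1},\ l = F_i,\ m = F_j,\ n = F_{j+1}$, namely
\[
  B = aF_iF_j - 2F_{i+1}F_j + aF_{i+1}F_{j+1} - 2F_iF_{j+1},
\]
and note that this expression is symmetric under $i \leftrightarrow j$. Hence it suffices to prove the first limit; fix $i$, write $B = B_{ij}$, and view it as linear in $(F_j, F_{j+1})$: $B_{ij} = PF_j + QF_{j+1}$ with $P = aF_i - 2F_{i+1}$ and $Q = aF_{i+1} - 2F_i$.

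Next I would record four elementary facts: (i) $P \le 0$ --- immediate for $i = 0$, and for $i \ge 1$ substitute $F_{i+1} = aF_i - F_{i-1}$ to get $P = 2F_{i-1} - aF_i \le (2-a)F_{i-1} \le 0$; (ii) $P + Q = (a-2)(F_i + F_{i+1}) \ge a - 2 \ge 1$; (iii) $F_{j+2} - F_{j+1} \ge F_{j+1} - F_j$, since the difference equals $(a-2)F_{j+1} \ge 0$; and (iv) $F_{j+1} - F_j \ge 1$ for all $j \ge 0$ (it equals $1$ at $j = 0$ and is nondecreasing in $j$ by (iii)). Using (i) and (iii),
\[
  B_{i(j+1)} - B_{ij} = P(F_{j+1}-F_j) + Q(F_{j+2}-F_{j+1}) \ge (P+Q)(F_{j+2}-F_{j+1}) \ge (a-2)\cdot 1 \ge 1.
\]
Summing over $j$ then gives $B_{ij} \ge B_{i0} + j$, and since $B_{i0} = aF_{i+1} - 2F_i > 0$ (or by \cref{BisGreaterThan2}) we conclude $B_{ij} \to \infty$ as $j \to \infty$. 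The limit with $j$ fixed and $i \to \infty$ follows from the $i \leftrightarrow j$ symmetry noted above.

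I do not expect a genuine obstacle here. The one point requiring care is that $P = aF_i - 2F_{i+1}$ is negative, so the term $P(F_{j+1}-F_j)$ cannot simply be discarded; instead, since $P \le 0$, replacing $F_{j+1}-F_j$ by the larger $F_{j+2}-F_{j+1}$ only decreases that summand, after which $P$ and $Q$ combine into the manifestly positive $P+Q$. (One can alternatively extract the conclusion from the identity $B_{i(j+1)} - \lambda_- B_{ij} = (P + \lambda_+ Q)\lambda_+^{\,j}$, where $\lambda_\pm = (a \pm \sqrt{a^2-4})/2$ and $P + \lambda_+ Q > 0$, but the elementary bound above avoids introducing the eigenvalues.)
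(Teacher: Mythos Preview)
Your proof is correct and takes a genuinely different route from the paper's. Both arguments begin by invoking the $i\leftrightarrow j$ symmetry of $B$ to reduce to a single index. The paper then bounds $B$ from below \emph{directly}: it rewrites
\[
  B \ge F_{i+1}(n-2m) + aF_im + 2n(F_{i+1}-F_i)
\]
and argues that each summand tends to infinity as $j\to\infty$. You instead bound the \emph{increment} $B_{i(j+1)}-B_{ij}$ from below by the positive constant $a-2\ge 1$, which after telescoping gives the explicit linear estimate $B_{ij}\ge B_{i0}+j$. Your approach is a bit tidier---it handles $i=0$ uniformly (in the paper's decomposition the second and third summands vanish when $F_i=0$, so only the first actually drives the limit there) and it yields a quantitative growth rate---while the paper's direct decomposition is marginally shorter once one accepts the three limits. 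The key subtlety you flag, that $P=aF_i-2F_{i+1}\le 0$ so the term $P(F_{j+1}-F_j)$ must be dominated rather than discarded, is handled correctly by replacing $F_{j+1}-F_j$ with the larger $F_{j+2}-F_{j+1}$ and then combining $P$ and $Q$ into the manifestly positive $P+Q=(a-2)(F_i+F_{i+1})$.
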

\begin{proof}
  From symmetry, only the first half needs to be shown.
  We can calculate that
  \begin{align*}
    B &= alm - 2km + akn - 2ln\\
    &= aF_im - 2F_{i + 1}m + aF_{i + 1}n - 2F_in\\
    &= F_{i + 1}(an - 2m) + aF_im - 2F_in\\
    &\geq F_{i + 1}(3n - 2m) + aF_im - 2F_in\\
    &= F_{i + 1}(n - 2m) + aF_im + 2n(F_{i + 1} - F_i).
  \end{align*}
  If $i \geq 1$, then $F_{i + 1} = aF_i - F_{i - 1}$
  and $F_i > F_{i - 1}$,
  and if $i = 0$, then $F_{i + 1} = aF_i$.
  From this, we have
  \begin{align*}
    F_{i + 1} > (a - 1)F_i
  \end{align*}
  and therefore
  \begin{align*}
    2n(F_{i + 1} - F_i) > 2(a - 2)nF_i.
  \end{align*}
  From $n > 2m$ and $a \geq 3$, we have
  \begin{align*}
    \lim_{j \to \infty} F_{i + 1}(n - 2m) &= \infty,\\
    \lim_{j \to \infty} aF_im &= \infty,\\
    \lim_{j \to \infty} 2(a - 2)nF_i &= \infty.
  \end{align*}
  Consequently, we have
  \begin{align*}
    \lim_{j \to \infty} B = \infty.
  \end{align*}
\end{proof}
From \cref{Btoinfty}, we have
\begin{align*}
  \lim_{i \to \infty} E^+ = -1, \hsp \lim_{j \to \infty} E^+ = -1.
\end{align*}
The above can be summarized as follows.
\begin{prop}
  \label{rangeOfE}
  $E^+$ is strictly increasing for $i, j$, and
  \begin{align*}
    - \frac{a + 2}{a - 2} \leq E^+ < -1
  \end{align*}
  and
  \begin{align*}
    \lim_{i \to \infty} E^+ = -1, \hsp \lim_{j \to \infty} E^+ = -1.
  \end{align*}
  \qed
\end{prop}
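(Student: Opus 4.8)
The plan is to read everything off from the closed form $E^+ = -\frac{B+2}{B-2}$ already obtained via \cref{qrps}, together with the properties of $B$ established in \cref{BisGreaterThan2} and \cref{Btoinfty}. The first move is the algebraic rewriting $E^+ = -1 - \frac{4}{B-2}$, which displays $E^+$ as a strictly increasing function of $B$ on the interval $(2,\infty)$ and makes $E^+ < -1$ immediate once we know $B > 2$.

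For the monotonicity in $i$ and $j$, I would invoke the monotonicity of $B_{ij}$ in each of $i$ and $j$ that is proved inside \cref{BisGreaterThan2}: composing the strictly increasing map $B \mapsto E^+$ with the strictly increasing maps $i \mapsto B$ and $j \mapsto B$ yields the claim. For the lower bound, I would note that the infimum of $B$ over $i,j \in \bbZ_{\geq 0}$ is attained at $i = j = 0$, where the proof of \cref{BisGreaterThan2} records $B = a$; hence $E^+ \geq -\frac{a+2}{a-2}$, with equality exactly at $i = j = 0$. Combined with $E^+ < -1$ this gives the stated range $-\frac{a+2}{a-2} \leq E^+ < -1$.

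For the two limits, I would feed \cref{Btoinfty} into the same rewriting: for fixed $j$ we have $B \to \infty$ as $i \to \infty$, and for fixed $i$ we have $B \to \infty$ as $j \to \infty$, so in either regime $\frac{4}{B-2} \to 0$ and hence $E^+ \to -1$.

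I do not anticipate a genuine obstacle: all of the analytic content — the identity $B^2 - AC = (a^2 - 4)(kn - lm)^2$, the values $A = C = -2$, the strict positivity and monotonicity of $B$, and the divergence $B \to \infty$ — has already been carried out in \cref{B^2-AC}, \cref{ACis-2}, \cref{BisGreaterThan2} and \cref{Btoinfty}, so the proposition is essentially a bookkeeping summary. The only point that needs a little care is the edge case $i = j = 0$, which must be tracked so that the lower bound is recorded as a non-strict inequality while the upper bound remains strict.
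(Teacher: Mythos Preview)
Your proposal is correct and follows essentially the same route as the paper: the proposition is stated with a \qed\ and its content is the discussion immediately preceding it, which likewise argues that $E^+$ is strictly increasing in $B$ (your rewriting $E^+ = -1 - \tfrac{4}{B-2}$ just makes this transparent), combines this with the monotonicity of $B$ from \cref{BisGreaterThan2}, the value $B=a$ at $i=j=0$, and \cref{Btoinfty} for the limits.
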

\begin{exam}
  Let $a = 3$. Then
  $F_0 = 0 ,\; F_1 = 1 ,\; F_2 = 3 ,\; F_3 = 8, \cdots$.
  The $E^+$'s for the $\lisl_2$ triples created in \cref{mainThm}
  are as in \Cref{table}.
  \begin{table}[H]
    \centering
    \caption{The $E^+$'s for the $\lisl_2$ triples created in \cref{mainThm}, $a = 3$}
    \label{table}
    \begin{tabular}{c|cccccc}
      $(k, l) \backslash (m, n)$ & (0, 1) & (1, 3) & (3, 8) & (8, 21) & (21, 55) & $\cdots$\\ \hline
      (1, 0)   & $-5$ & $-1.8$ & $-1.25$ & $-1.088889$ & $-1.033058$ &\\
      (3, 1)   & $-1.8$ & $-1.25$ & $-1.088889$ & $-1.033058$ & $-1.012500$ &\\
      (8, 3)   & $-1.25$ & $-1.088889$ & $-1.033058$ & $-1.012500$ & $-1.004756$ & $\cdots$\\
      (21, 8)  & $-1.088889$ & $-1.033058$ & $-1.012500$ & $-1.004756$ & $-1.001814$ &\\
      (55, 21) & $-1.033058$ & $-1.012500$ & $-1.004756$ & $-1.001814$ & $-1.000693$ &\\
      $\vdots$ & & & $\vdots$ & & & $\ddots$
    \end{tabular}
  \end{table}
\end{exam}
\section*{Acknowledgements}
I would like to express my appreciation to my supervisor,
Prof. Hisayosi Matumoto for his thoughtful guidance.
I am also grateful to my colleague Yuki Goto for meaningful advices on English.


\begin{thebibliography}{9}
  \bibitem[CM93]{cm} D. H. Collingwood, W. M. McGovern,
    \textit{Nilpotent Orbits in Semisimple Lie Algebras},
    Van Nostrand Reinhold, 1993
  \bibitem[Dyn57]{dyn} E. Dynkin,
    \textit{Semisimple subalgebras of simple Lie algebras},
    American Mathematical Society Translations: Series 2,
    6, 1957, pp. 111--245
  \bibitem[GOW02]{gow} M. R. Gaberdiel, et al.,
    \textit{A class of Lorentzian Kac-Moody algebras},
    Nuclear Physics B, 645, 2002, pp. 403--437
  \bibitem[Kac90]{kac} V. G. Kac, \textit{Infinite dimensional Lie algebras 3rd edition},
    Cambridge university press, 1990
  \bibitem[KM95]{km} S-J. Kang, D. J. Melville,
    \textit{Rank 2 symmetric hyperbolic Kac-Moody algebras},
    Nagoya Mathematical Journal, 140, 1995, pp. 41--75
  \bibitem[Kos59]{kos} B. Kostant,
    \textit{The principal three-dimensional subgroup and the Betti numbers of a complex
      simple Lie group},
    American Journal of Mathematics, 81, 1959, pp. 973--1032
  \bibitem[NO01]{no} H. Nicolai, D. I. Olive, \textit{
    The Principal $SO(1,2)$ Subalgebra of a Hyperbolic Kac Moody Algebra
  }, Letters in Mathematical Physics, 2001, pp. 141--152
\end{thebibliography}
\end{document}